\theoremstyle{plain}
\newtheorem{thm}{Theorem}[section]
\newtheorem{prop}{Proposition}[section]
\newtheorem{lem}[prop]{Lemma}
\newtheorem{cor}[prop]{Corollary}
\newtheorem{defi}[prop]{Definition}
\newtheorem{rmk}[prop]{Remark}
\newtheorem*{proposition*}{Proposition}
\numberwithin{equation}{section}
\newcommand {\R} {\mathbb{R}} 
 \newcommand {\N} {\mathbb{N}}
\newcommand {\p} {\partial}
\newcommand{\eps}{\epsilon}
\newcommand {\supp} {\text{supp}}
\newcommand{\wt}{\widetilde}
\newcommand{\vphi}{\varphi}
\newcommand{\LC}{\left(}
\newcommand{\RC}{\right)}
\newcommand{\norm}[1]{\lVert #1 \rVert}         
\DeclareMathOperator {\dist} {dist}
\title[Determination of a nonlinear hyperbolic system]{Determining a nonlinear hyperbolic system with unknown sources and nonlinearity}
\author[Y.-H. Lin]{Yi-Hsuan Lin}
\address{Department of Applied Mathematics, National Yang Ming Chiao Tung University, Hsinchu, Taiwan}
\email{yihsuanlin3@gmail.com}
\author[H. Liu]{Hongyu Liu}
\address{Department of Mathematics, City University of Hong Kong, Kowloon, Hong Kong SAR, China}
\email{hongyu.liuip@gmail.com, hongyliu@cityu.edu.hk}
\author[X. Liu]{Xu Liu}
\address{Key Laboratory of Applied Statistics of MOE,  
	School of Mathematics and Statistics,  Northeast Normal University, Changchun, China}
\email{liux216@nenu.edu.cn}
\begin{document}
	\maketitle
	
	\begin{abstract}
		
		This paper is devoted to  some inverse boundary problems associated with a time-dependent semilinear hyperbolic  equation,  where both  nonlinearity and  sources (including  initial displacement and  initial velocity) are unknown.  It is shown in several generic scenarios that one can uniquely determine the nonlinearity and/or the sources by using passive or active boundary observations.  In order to exploit  the nonlinearity and the sources  simultaneously, we develop a new technique,  
		which combines the observability for linear wave  equations  and an approximation property with higher order linearization for the semilinear hyperbolic equation. \medskip

		\noindent{\bf Keywords:} Calder\'on's problem,  semilinear hyperbolic equation, simultaneous recovery, higher order linearization, complex geometrical optics solutions.

		\noindent{\bf 2010 Mathematics Subject Classification:}~~35R30, 35L70, 46T20, 78A05
		
	\end{abstract}

	\tableofcontents

	\section{Introduction}\label{Sec 1}
	
	The inverse problems for nonlinear partial differential equations (PDEs) have received considerable attention in the literature. 
	In \cite{14}, an inverse boundary problem was proposed for a nonlinear parabolic PDE, and it was shown that the first linearization of the boundary Dirichlet-to-Neumann (DN) map associated with the nonlinear PDE agrees to the DN map of the linearized equation. Hence, results developed for inverse problems of linear PDEs can be applied to solve the inverse problems for many nonlinear PDEs. For the semilinear elliptic
	equation $\Delta u +a(x,u)=0$, the inverse problem of determining $a(\cdot, \cdot)$ was investigated in \cite{16,  48} for dimension $n\geq 3$, and in \cite{13,15,48} for $n=2$.  
	Moreover,  some inverse problems have been studied for quasilinear elliptic equations  in 
	\cite{2,  18,  36,  42, 46,  49}, for  degenerate elliptic $p$-Laplacian equations in \cite{1,  45},  for fractional semilinear Schr\"odinger equations in \cite{24, 25,  38},    and etc. 
	The Calder\'on type inverse problems for quasilinear PDEs on Riemannian manifolds was recently investigated in  \cite{30}  by using the Poisson embedding approach. 
	Furthermore,   we refer the readers to \cite{47,  50} for more relevant discussions on inverse problems of nonlinear elliptic equations in the existing developments. Recently, an important method was proposed to study inverse problems for semilinear elliptic equations, which is referred to as the \emph{higher order linearization}, and this method has been applied to tackle some challenging inverse problems (\cite{9, 10, 11,  20,  21, 26,  27, 37}).  
	
	The inverse problems for nonlinear hyperbolic equations have also attracted a lot of attention. It turns out that the nonlinear interaction of waves can generate new waves, which are actually beneficial in solving the related inverse problems. In \cite{23}, it was shown that the local measurements may uniquely recover global topology and differentiable structure, and the conformal class of the metric $g$ on a globally hyperbolic $4$-dimensional Lorentzian manifold,  for a wave equation with a quadratic nonlinearity. In \cite{32}, inverse problems were investigated for more general semilinear wave equations on Lorentzian manifolds, and in \cite{31},  analogous inverse problems were studied for the Einstein-Maxwell equations. We refer to \cite{3,   12, 22,  28,   29,  51} and rich references therein for more related studies of inverse problems for hyperbolic PDEs.
	
	The inverse problems mentioned  above are mainly concerned with recovering  coefficients of the underlying nonlinear PDEs through \emph{active measurements}.   In the physical scenario, the PDE coefficients correspond to the unknown medium parameters. The active measurements mean that one actively inputs a certain source into the underlying PDE system to generate the output for the corresponding inverse problem. The input-output pair constitutes a typical measurement data set for many inverse problems including wave probing, nondestructive testing and medical imaging. On the other hand, many inverse problems make use of \emph{passive measurements}, where the measurement data are generated by an unknown source. Inverse problems with passive measurements are usually referred to as the inverse source problems, since the unknown sources are the target objects to be recovered. Typical inverse source problems include those ones from the hazardous radiation detection and the cosmological searching. Recently, the inverse problems by using passive measurements to simultaneously detect the unknown sources and the surrounding mediums, have received considerable studies in the literature, due to their strong backgrounds of practical applications including photo-acoustic and thermo-acoustic tomography \cite{40}, brain imaging \cite{7}, geomagnetic anomaly detection \cite{5,  6} and quantum mechanics \cite{33, 34}. In fact, in order to achieve the desired simultaneous recovery results, the use of both passive and active measurements was proposed for some of those inverse problems \cite{33, 34}. 
	
	Motivated by the studies discussed above, we investigate in this paper inverse boundary problems associated with a time-dependent semilinear hyperbolic  equation, where both nonlinearity and  sources are unknown. The sources include  initial displacement and initial velocity of the nonlinear wave field. It is emphasized that semilinear term considered in our study is more general than those considered in the aforementioned literature on inverse problems for nonlinear hyperbolic equations. 
	In fact,  the semilinear terms in our study may contain zeroth- and first-order terms (with respect to the underlying wave function), and both of them may be unknown. It is worth mentioning that this also constitutes one of the novel points of our study compared to most of the existing studies. In the physical situation, the zeroth-order term is in fact a certain source of the hyperbolic system. However, in order to unify and ease the exposition, we mainly refer to the initial data as the sources in our study. We establish in several generic scenarios that one can uniquely determine  the nonlinearity or/and the sources by using passive or/and active boundary observations. The major findings can be briefly summarized as follows:
	
	\begin{itemize}
		\item[(1)] When the nonlinearity is known, by using the passive measurement, we can establish a quantitative uniqueness result in determining the initial displacement and  initial velocity of the wave field from the partial boundary measurement.
		
		\item[(2)] When the nonlinearity is unknown, but belonging to a certain general class, we can also establish the qualitative uniqueness by using passive measurements to determine the initial displacement and initial velocity of the wave field.

		\item[(3)] When the initial and boundary data are small enough, and the coefficients  are admissible (see Definition \ref{Def: admissible coefficients}), one can simultaneously recover the initial data as well as the nonlinearity by using the active measurement. 
	\end{itemize}
	
	It turns out that the study for simultaneous recovery of both the sources and the nonlinearity becomes radically much more challenging than the case for recovering one of them by assuming the other is known. Finally, we would like to briefly discuss the technical novelties and developments in our study. The high order linearization technique and the nonlinear wave interaction technique mentioned earlier critically rely on the small inputs for  inverse problems. The nonlinearity shall successively generate higher order terms (with respect to certain asymptotically  small parameters) that can provide more information for the inverse problems. We shall develop techniques following a similar spirit in tackling  new inverse problems.   On the other hand, it is known that one salient feature for the nonlinear hyperbolic system is the finite-time blowup of  solutions. If certain conditions are fulfilled, the  blowup may be avoided  through boundary inputs in the context of PDE controls \cite{8}. In this paper, we shall also make use of the controllability properties for semilinear wave equations in studying the associated inverse problems. We believe that the mathematical strategy developed in the current article can be extended to attack other challenging inverse problems in different contexts. Very recently, we also investigate similar problem for nonlinear parabolic systems, and we refer readers to \cite{39} for further discussions.
	
	The rest of this paper is organized as follows. In Section \ref{Sec 2}, we state the main results for the inverse problems. In Section \ref{Sec 3},   the well-posedness on the initial-boundary value problems of the semilinear hyperbolic equations within different settings of nonlinearities are studied. Section \ref{Sec 4} is devoted to the  determination of  the initial data by using control methods for the hyperbolic equations. In Section \ref{Sec 5},  an approximation property for the linear wave equations is established. Furthermore,  the higher order linearization technique  is developed to prove the uniqueness of determining both the nonlinearity and the initial data. Finally, in Appendix \ref{Section A}, we present the complex geometrical optics solutions for linear wave equations, which are needed in the proof of the main results.

	\section{Statement of  main results}\label{Sec 2}

	Let $\Omega\subseteq\mathbb{R}^n$ be a nonempty  bounded  domain
	with a smooth boundary $\Gamma$, for $n\geq 2$.    Assume that $\Gamma_0$ is a  relatively 
	open  subset  of $\Gamma$. Denote by $\nu=(\nu_1, \cdots ,\nu_n)$  the unit outer normal vector on $\Gamma$.     For  any  $T>0$,   
	set 
	$
	Q=\Omega\times(0,  T)$ and   $\Sigma=\Gamma\times(0,  T).
	$  
	Consider the following initial-boundary value problem of the semilinear wave equation: 
	\begin{eqnarray}\label{eq:wave1}
		\begin{cases}
			u_{tt}-
			\Delta  u+f(x,t,u)=0  &\text{ in } Q,\\
			u=h   &\text{ on }\Sigma,\\
			u(x, 0)=\varphi(x) , \   u_t(x, 0)=\psi(x)   &\text{ in } \Omega,
		\end{cases}
	\end{eqnarray}
	where $(\varphi, \psi)$  is a pair of initial values,  
	$h$ is a boundary  value with $\mathrm{supp}\  h\subseteq \Gamma_0\times[0, T]$
	and $f=f(x,t,s): Q\times \mathbb{R} \rightarrow \mathbb{R}$ is a given
	function,  so that  (\ref{eq:wave1})  is well-posed.  Some local  and global well-posedness results
	for  (\ref{eq:wave1}) will be given in Section \ref{Sec 3}, respectively.
	
	\smallskip

	First,  we present the first inverse  problem on determining initial values.
	For
	any  $(\varphi, \psi)\in H^1_0(\Omega)\times L^2(\Omega)$, $h=0$  and  
	a   suitable  function $f$, 
	which guarantees the global well-posedness of (\ref{eq:wave1}),    introduce the following  
	passive measurement: 
	$$
	\Lambda^0_{\varphi,\psi, f}=\partial_\nu u\Big|
	_{\Gamma_0\times (0, T)}, 
	$$ 
	where  $u$ is the solution to \eqref{eq:wave1}  associated to  $(\varphi, \psi)\in H^1_0(\Omega)\times L^2(\Omega)$
	and $h=0$, and  $\partial_\nu u$ denotes the  outer normal derivative of $u$. 
	Physically, $(\varphi, \psi, f)$ may be regarded as  unknown sources defined on $\Omega$  and 
	$Q\times\mathbb R$,  $h$ is a boundary input, and all of  them generate a wave filed 
	$(u, u_t)$. If $h=0$,  the wave field is uniquely generated by the sources $(\varphi, \psi, f)$. 
	$\Lambda^0_{\varphi,\psi, f}$ encodes the local boundary measurement   on  $\Gamma_0$ of the wave field.

	\medskip
	
	In this paper, we are first concerned with the following   inverse problem:
	
	\begin{itemize}
		\item \textbf{Inverse problem 1.} Can we identify  unknown functions $(\varphi, \psi, f)$ by using the passive measurement $\Lambda_{\varphi,\psi, f}^0$?
	\end{itemize}
	For this problem,  we  give some 
	assumptions.
	Suppose that 
	\begin{equation}\label{BBCC}
		\Gamma_0=\Big\{x\in \Gamma\    \Big| \   (x-x_0)\cdot \nu(x)> 0\Big\}\  \mbox{   for  some }
		x_0\in \mathbb R^n\setminus \overline{\Omega}.
	\end{equation}
	Assume that 
	$T>T^*$, where
	\begin{equation}\label{T conditions}
		T^*=2\max\limits_{x\in\overline{\Omega}}  |x-x_0|.	
	\end{equation}
	Also,  introduce the following increasing condition on   $f:  Q\times\mathbb R\rightarrow\mathbb R$:
	\begin{align}\label{condition of nonlinear f at infinity data}
		\limsup\limits_{s\rightarrow\infty}\displaystyle\frac{\p_s f(x, t, s)}{\mbox{ln}|s|}=0,\quad
		\mbox{ uniformly for }(x,  t)\in \overline Q,
	\end{align}
	and a set:
	\begin{eqnarray}\label{set M}
		\begin{array}{ll}
			\displaystyle\mathcal{M}_T=\Big\{ f: Q\times\mathbb R\rightarrow\mathbb R\ \Big|  &
			f(x, t, \cdot)\in C^1(\mathbb R)\mbox{ in }Q, \  f(\cdot, \cdot, 0)\in L^2(Q),\\
			&\displaystyle\mbox{ and } 
			(\ref{condition of nonlinear f at infinity data})\mbox{  holds } \Big\}.
		\end{array}
	\end{eqnarray} 
	By Section \ref{Sec 3},   for any $(\varphi, \psi)\in H^1_0(\Omega)\times L^2(\Omega)$,  
	$h=0$ and $f\in \mathcal{M}_T$, (\ref{eq:wave1}) has a unique solution 
	$$
	u\in H_0=C([0, T]; H^1_0(\Omega))\cap C^1([0, T]; L^2(\Omega)).
	$$
	Moreover,  $\partial_\nu  u\in L^2(\Sigma)$.
	Note that any function in $L^\infty(Q; W^{1, \infty}(\mathbb R))$ satisfies  (\ref{condition of nonlinear f at infinity data}).
	
	\medskip
	
	The  uniqueness  result of this paper on the  
	\textbf{Inverse problem 1}  is stated as follows.

	\begin{thm}[Stability of initial data by passive measurement]\label{Main Thm 1} 
		For any $T>T^*$,    $f\in\mathcal M_T$  and 
		$(\varphi_j, \psi_j)\in H^1_0(\Omega)\times L^2(\Omega)$ $(j=1, 2)$,   if $u_j\in H_0$  is the solution  to the
		following semilinear  wave equation: 
		\begin{align}\label{IBVP for thm 1 for j=1,2}
			\begin{cases}
				u_{j,  tt}-\Delta u_j+f(x,t,u_j)=0  &\text{ in } Q,\\
				u_j=0  &\text{ on }\Sigma,\\
				u_j(x, 0)=\varphi_j(x), \     u_{j, t}(x, 0)=\psi_j(x)  
				&\text{ in } \Omega,
			\end{cases}
		\end{align}
		then the following quantitative stability estimate holds:
		\begin{align}\label{Stability estimate in Thm 1}
			\begin{split}
				&\left\|(\varphi_1-\varphi_2, \psi_1-\psi_2)\right\|_{H^1_0(\Omega)\times L^2(\Omega)} \\
				&\leq   C(f, u_1, u_2, n, T, \Omega,  \Sigma,  \Gamma_0) \left\|\Lambda_{\varphi_1,\psi_1, f}^0-\Lambda_{\varphi_2, \psi_2, f}^0\right\|_{L^2(0,  T;  L^2(\Gamma_0))},
			\end{split}
		\end{align}
		where $ C(f, u_1, u_2, n, T, \Omega,  \Sigma,  \Gamma_0)$ denotes a positive constant depending  on $f$, $u_1$, $u_2$, $n$, $T$, $\Omega$, $\Sigma$ and $\Gamma_0$.	
	\end{thm}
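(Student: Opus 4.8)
\medskip
\noindent\textbf{Proof strategy.}
The idea is to reduce the estimate \eqref{Stability estimate in Thm 1} to an \emph{observability inequality} for a linear wave equation, and then to establish that inequality by a global Carleman estimate whose geometric ingredients are precisely hypothesis \textbf{(H)} and the definition \eqref{lll} of $\Gamma_0$. First I would set $w:=u_1-u_2$, where $u_j\in H_0$ is the solution of \eqref{IBVP for thm 1 for j=1,2} furnished by the well-posedness recalled above. Subtracting the two equations and using that $f(x,t,\cdot)$ is Lipschitz with constant at most $\|f\|_{\mathcal M_T}$ uniformly in $(x,t)$ (since $f\in\mathcal M_T=L^\infty(Q;W^{1,\infty}(\mathbb R))$), one may write $f(x,t,u_1)-f(x,t,u_2)=c(x,t)\,w$ with $\|c\|_{L^\infty(Q)}\le\|f\|_{\mathcal M_T}$ (for instance $c=\int_0^1\partial_z f(x,t,u_2+\tau(u_1-u_2))\,d\tau$), so that
\begin{equation*}
w_{tt}-\nabla\cdot(\sigma\nabla w)+c(x,t)\,w=0 \ \text{ in }Q,\qquad w=0 \ \text{ on }\Sigma,
\end{equation*}
with $w(\cdot,0)=\varphi_1-\varphi_2$ and $w_t(\cdot,0)=\psi_1-\psi_2$. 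Since the conormal trace $\sum_{i,j=1}^n\sigma^{ij}\nu_i w_{x_j}$ on $\Gamma_0\times(0,T)$ equals $\Lambda^0_{\varphi_1,\psi_1,f}-\Lambda^0_{\varphi_2,\psi_2,f}$, the assertion \eqref{Stability estimate in Thm 1} becomes equivalent to the observability inequality
\begin{equation*}
\bigl\|(w(\cdot,0),w_t(\cdot,0))\bigr\|_{H^1_0(\Omega)\times L^2(\Omega)}\le C\,\Bigl\|\sum_{i,j=1}^n\sigma^{ij}\nu_i w_{x_j}\Bigr\|_{L^2(0,T;L^2(\Gamma_0))}
\end{equation*}
for this linear equation, with $C$ depending on $\|c\|_{L^\infty(Q)}$ (hence, by the bound above, on $f$) together with $n,T,\Omega,\Sigma,\Gamma_0$.

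To prove the observability inequality I would invoke a global Carleman estimate for the wave operator $\partial_t^2-\nabla\cdot(\sigma\nabla)$ with a weight of the form $\phi(x,t)=d(x)-\beta\bigl(t-\tfrac T2\bigr)^2$ for a suitable $\beta>0$, $d$ being the function supplied by \textbf{(H)}. The algebraic inequality \eqref{condition on sigma} is exactly what renders the quadratic form arising from the second-order commutator in the Carleman computation coercive, uniformly in the large parameter $s$, while the hypothesis that $d$ has no critical point keeps the first-order weight terms from degenerating; the zeroth-order term $cw$ is then absorbed by taking $s\gg1$, since $\|c\|_{L^\infty(Q)}<\infty$. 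As $\phi(\cdot,t)$ is maximal at $t=\tfrac T2$ and strictly decreasing in $|t-\tfrac T2|$, for $T$ larger than some $T^*$ depending only on $\sigma,\Omega,n$ the weight near $t=0$ and $t=T$ drops below its central value by a definite amount, which lets one dominate $\int_a^b E(t)\,dt$, for some $0<a<b<T$, by the boundary term, $E(t)$ denoting the energy of $w$ at time $t$. Since $w=0$ on $\Sigma$, only the conormal derivative survives there, and by \eqref{lll} the boundary term reduces — after dropping the contribution from $\Gamma\setminus\Gamma_0$, which has the right sign — to one controlled by $\bigl\|\sum_{i,j=1}^n\sigma^{ij}\nu_i w_{x_j}\bigr\|_{L^2(0,T;L^2(\Gamma_0))}$. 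Finally, the energy identity for the linear equation, with a Gr\"onwall correction accounting for the $L^\infty$ potential $c$, gives $E(0)\le C\int_a^b E(t)\,dt$, and $E(0)\simeq\|(\varphi_1-\varphi_2,\psi_1-\psi_2)\|_{H^1_0(\Omega)\times L^2(\Omega)}^2$ by Poincar\'e's inequality and the uniform positive definiteness of $\sigma$; this completes the reduction.

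The principal obstacle is establishing the Carleman estimate under hypothesis \textbf{(H)}: one must verify that \eqref{condition on sigma} genuinely produces the needed coercive lower bound for the commutator quadratic form — this is precisely where the algebraic structure of \eqref{condition on sigma} is used — and then carefully keep track of every boundary term generated by the integrations by parts, so that the only one with an unfavourable sign is supported on $\Gamma_0$; the threshold $T^*$ is determined at this stage. A secondary, more technical, matter is the justification, at the solution regularity $w\in C([0,T];H^1_0(\Omega))\cap C^1([0,T];L^2(\Omega))$, of the integrations by parts and of the membership $\sum_{i,j=1}^n\sigma^{ij}\nu_i w_{x_j}\in L^2(\Sigma)$ (\emph{hidden regularity} for the wave equation); this, along with the energy identities used above, should follow from the well-posedness theory of Section \ref{Sec 3} and the control-theoretic estimates of Section \ref{Sec 4}.
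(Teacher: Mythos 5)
Your proposal is correct and follows essentially the same route as the paper: the paper's proof is exactly your reduction (set $w=u_1-u_2$, use the mean value theorem to write $f(x,t,u_1)-f(x,t,u_2)=a(x,t)w$ with $a\in L^\infty(Q)$ since $f\in\mathcal M_T$, and observe that the conormal trace of $w$ on $\Gamma_0\times(0,T)$ is the difference of the passive measurements), followed by a direct appeal to the observability inequality of Lemma \ref{lem:Controllability of linear wave equation}. The Carleman-estimate argument you sketch for that observability inequality is the standard one, but the paper does not carry it out — it simply cites the lemma from the literature — so your reduction already matches the paper's entire proof.
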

	
	As a corollary of Theorem \ref{Main Thm 1},   
	introduce
	the  following set on   $f$:
	\begin{eqnarray}\label{set C}
		\mathcal{C}_{T}=\Big\{ f:  Q\times\mathbb R\rightarrow\mathbb R\  \Big|\,     
		&f(x, t, s)=f_0(x, t, s)\chi_{[0, T^*+\epsilon]}(t)+g(x,t,s)\chi_{[T^*+\epsilon, T]}(t) \\[2mm]
		&\quad\mbox{for some }\epsilon>0\mbox{  with }T^*+\epsilon<T  \mbox{ and any given }f_0\in\mathcal{M}_T,\\
		&\quad\mbox{where } g\in\mathcal{M}_T \Big\},   
	\end{eqnarray}
	where $\chi_E=\begin{cases}
		1 &\text{ if }x\in E\\
		0 & \text{ otherwise}
	\end{cases}$ denotes the characteristic function on a set $E\subseteq\mathbb R$.
	The following corollary states that when the nonlinear function $f\in \mathcal{C}_T$, then one can determine the initial data regardless of the nonlinearity $f\in \mathcal{C}_T$.
	
	\medskip

	\begin{cor}\label{Main Thm 2}
		For any $T>T^*$,   $f_j\in\mathcal C_{T}$ given by \eqref{set C},
		and $(\varphi_j, \psi_j)\in H^1_{0}(\Omega)\times L^2(\Omega)$ $(j=1, 2)$.
		Let $u_j\in H_0$  be the solution  to the
		following semilinear  wave equation: 
		\begin{align}\label{QQ}
			\begin{cases}
				u_{j, tt}-\Delta u_j+f_j(x,t,u_j)=0  &\text{ in } Q,\\
				u_j=0  &\text{ on } \Sigma,\\
				u_j(x, 0)=\varphi_j(x) , \      u_{j, t}(x, 0)=\psi_j(x) &\text{ in }\Omega,
			\end{cases}
		\end{align}
		and	\begin{equation}\label{eq:a1}
			\Lambda^0_{\varphi_1,\psi_1,f_1}=\Lambda^0_{\varphi_2, \psi_2,f_2},
		\end{equation}
		then 
		$$
		(\varphi_1, \psi_1)=(\varphi_2,\psi_2)\quad\mbox{  in  }\Omega.
		$$	
		This means that the passive  measurement 
		$\Lambda^0_{\varphi,\psi, f}$ uniquely determines $(\varphi,\psi)$, independent of  
		functions $f$ in $\mathcal C_{T}$.
	\end{cor}

	\begin{rmk} Let us remark that:
		\begin{itemize}
			\item[(1)] Theorem $\ref{Main Thm 1}$ and  Corollary $\ref{Main Thm 2}$
			show that one may use the passive  measurement 
			to determine  initial data $(u(0), u_t(0))=(\psi, \psi)$, respectively, 
			for a given   $f\in \mathcal{M}_T$ or  any $f \in \mathcal C_{T}$,
			where $\mathcal{M}_T$ and $\mathcal C_{T}$ 
			are given in \eqref{set M} and \eqref{set C}, respectively. 
			
			\item[(2)] Note that  the requirements on nonlinear functions $f$ in 
			\eqref{set M} and \eqref{set C} are technical.
			The known observability result for linear wave equations is used to 
			prove the above uniqueness results. 
			When $f$ is fixed,   the conditions on     \eqref{BBCC},  \eqref{set M}
			and $T>T^*$ assure  the coefficient $a\in L^\infty(0, T; L^n(\Omega))$ 
			for  the linearized system of \eqref{eq:wave1}:
			$$\wt u_{tt}-\Delta \wt u+a(x, t)\wt u(x, t)=0,$$
			where $a$  is given in \eqref{AAAA}. It
			satisfies the regularity   requirement  in the observability result.  
			When $f$ is unknown,  it  is chosen in the set \eqref{set C}. 
			Indeed, this condition   divides $f$ into two parts with respect to time. 
			In  the first time interval $[0,  T^*+\epsilon]$, 
			we  may identify initial  data by the passive measurement  by Theorem $\ref{Main Thm 1}$. 
			Hence,  there is indeed no requirement on nonlinear function  $g$ in the rest time interval $[T^*+\epsilon, T]$ only if it ensures 
			the well-posedness. 
		\end{itemize}
	\end{rmk}
	\begin{rmk}
		The results  on inverse problems
		in Theorem $\ref{Main Thm 1}$ and  Corollary $\ref{Main Thm 2}$  may be 
		generalized to the following semilinear  hyperbolic equation:
		\begin{eqnarray*}
			\begin{cases}
				u_{tt}-\nabla\cdot(\sigma\nabla u)
				+f(x,t,u)=0  &\text{ in } Q,\\
				u=h   &\text{ on }\Sigma,\\
				u(x, 0)=\varphi(x) , \   u_t(x, 0)=\psi(x)   &\text{ in } \Omega,
			\end{cases}
		\end{eqnarray*}
		where $\sigma(\cdot)\in  C^2(\overline{\Omega};
		\mathbb R^{n\times  n})$ is a positive definite matrix-valued function, which satisfies  the  following  condition:
		\medskip
		\begin{itemize}
			\item[(\textbf{H})] 	{\it There exists a positive constant $\rho_0$ 
				and a positive  function $d(\cdot)\in C^2(\overline{\Omega})$
				without any critical point in $\overline{\Omega}$,  such that for any 
				$\LC x, \xi^1, \cdots, \xi^n\RC \in\overline{\Omega}\times\mathbb R^n$,
				\begin{align}\label{condition on sigma}
					\sum\limits_{i, j=1}^n\sum\limits_{i',  j'=1}^{n}\Big[  2\sigma^{i  j'}(\sigma^{i' j} d_{x_{i'}})_{x_{j'}}-
					\sigma^{i j}_{x_{j'}} \sigma^{i' j'} d_{x_{i'}}
					\Big] \xi^i\xi^j\geq \rho_0 \sum\limits_{i, j=1}^n \sigma^{i j}(x)\xi^i\xi^j.
				\end{align}
			}
		\end{itemize}
		Also,  
		\begin{equation}\label{lll}
			\Gamma_0=\left\{x\in \Gamma\  \Big| \, \sum\limits_{i, j=1}^n \sigma^{i j}(x)d_{x_i}(x)\nu_j(x)> 
			0\right\}.
		\end{equation}
		The above conditions on $\sigma$ and $\Gamma_0$ are used to guarantee 
		the observability of linear hyperbolic equations.
	\end{rmk}
	
	\medskip

	More importantly,  in this paper we will determine  
	coefficient  and initial data  simultaneously for the semilinear  wave  equation
	(\ref{eq:wave1}).  To  avoid confusion of notations,  
	we replace $f$ by $\tilde f$ and consider the 
	following semilinear wave equation:
	\begin{align}\label{eq:wave simul}
		\begin{cases}
			u_{tt}- \Delta u +\tilde f(x,t,u)=0  &\text{ in } Q,\\
			u= h   &\text{ on }\Sigma,\\
			u(x,  0)=\varphi(x), \quad   u_t(x, 0)=\psi(x)    &\text{ in } \Omega.
		\end{cases}
	\end{align}  
	For any given  pair of initial values
	$(\varphi, \psi)$ and a suitable  function $\tilde f$,  which
	guarantees the well-posedness of (\ref{eq:wave simul}),  
	define the following  the input-output map by
	\begin{align}\label{input-output semilinear}
		\Lambda_{\varphi,\psi,  \tilde f}(h) =\left(
		\partial_\nu  u\Big |_{\Sigma}, \ u(\cdot, T), \  
		u_t(\cdot, T)\right), \quad \text{ for all }   h\in E_\delta, 
	\end{align} 
	where $E_\delta$ will be defined later  and $u$  is the solution to 
	(\ref{eq:wave simul}) associated to $(\varphi, \psi, h)$.
	If $\Lambda_{\varphi, \psi, \tilde f}(h)$ is known for all $h\in E_\delta$, 
	it means that the  operator $\Lambda_{\varphi,\psi, \tilde f}$ 
	is known and it is called the  active measurement.  We are concerned with the following 
	inverse problem.
	
	\begin{itemize}
		\item \textbf{Inverse problem 2.} Can we identify  unknown initial 
		data and coefficient $(\varphi,\psi,  \tilde f)$ by using the active measurement 
		$\Lambda_{\varphi,\psi, \tilde f}$?
	\end{itemize}
	
	To our best knowledge, 
	this simultaneously recovering inverse problem for  semilinear wave equations is the
	first result to be considered in the field.

	First,  introduce some notations and
	assumptions. 
	Assume that   $m$ is a  
	positive integer and  define the  energy space $E^m$ as in \cite[Section 1]{28}  
	and \cite[Definition 3.5 in page 596]{4}: 
	\begin{align}\label{function space E^m}
		E^m = \bigcap _{ k=0 }^m C^k ([0,T];H^{m-k}(\Omega)),
	\end{align}
	which is equipped with the  norm $\norm{\cdot}_{E^m}$ as
	\begin{align*}
		\norm{u}_{E^m}=\sup_{0\leq  t\leq T} \sum_{k=0}^m \Big\|\p_t ^k u(\cdot, t)\Big\|_{H^{m-k}(\Omega)},
		\quad \forall\   u\in E^m.
	\end{align*}

	Inspired by \cite[Definition 1]{28}, we impose
	the following conditions on  $\tilde f$.
	
	\begin{defi}[Admissible coefficients]\label{Def: admissible coefficients}
		For any $T>T^*$ $($in $(\ref{T conditions}))$, 
		$\tilde f=\tilde f(x,t,s)$$: 
		Q\times\mathbb R\rightarrow \mathbb R$ is called an admissible 
		coefficient, if  it satisfies:
		\begin{itemize}
			\item[(1)] Analyticity on $\mathbb R$:	
			\begin{align}\label{condition of the nonlinear term f(x,t,z)}
				\begin{cases}
					\mbox{ the map $ s\mapsto \tilde f(\cdot,\cdot,  s)$
						is analytic  on $\mathbb  R$ with values in $E^{m+1}$},  \\
					\ \tilde f(x,t,0)=0, \mbox{ in } Q.
				\end{cases}
			\end{align}
			This means that  $\tilde f$ may be written  as the Taylor expansion at 
			any $s_0\in 
			\mathbb R$:
			\begin{equation*}
				\tilde f(x,t,s)= \sum_{k=0}^\infty \tilde f^{(k)}(x,t,s_0) \frac{(s-s_0)^k}{k!},
			\end{equation*} 
			where $\frac{\tilde f^{(k)}(x,t,s_0)}{k!}=\frac{\partial_s^k
				\tilde f(x,t,s_0)}{k!}$ are Taylor's coefficients at $s_0\in \mathbb R$, 
			for any $k\in \N$.
			
			\item[(2)] Compact support: For any $s\in\mathbb R$, 
			\begin{equation}\label{BDB}
				\supp\   \tilde   f(\cdot, \cdot, s) \subseteq \overline{\Omega} \times [t_1, t_2],
			\end{equation}
			where   $t_1$ and $t_2$ are two positive  constants  with $T^*<t_1<t_2<T$.	\end{itemize}
		
	\end{defi}
	
	\begin{rmk}
		The definition of  admissible coefficients 
		is inspired by the need to guarantee the  well-posedness  of  $(\ref{eq:wave simul})$  in $E^{m+1}$ and the application of  complex 
		geometrical optics solutions $($see Section $\ref{Sec 5})$. 
		Indeed,  in order to derive the well-posedness results,   the compact support condition on 
		$\tilde f$ may be weaken to 
		$$\supp \, \tilde  f(\cdot, \cdot, s) \subseteq \overline{\Omega} \times (0,   T].$$
		It suffices to require $\tilde  f$ to be zero  near  initial time for the compatibility
		conditions.  The compact support condition   $(\ref{BDB})$ is  technical and it will 
		be used in 
		studying
		the above inverse problem.
	\end{rmk}

	Furthermore,   for a positive  integer  $m$,  define the following  function space:
	\begin{equation}\label{BB}
		\mathcal{N}_m=\Big\{ h\in H^m(\Sigma)\ \Big|\  h\in H^{m-k}_0(0, T;  H^k(\Gamma)),\ 
		\text{ for } k=0, 1, \cdots,  m-1\Big\},
	\end{equation}
	and for  a positive constant $\delta$,  set
	$$
	E_\delta=\Big\{h\in \mathcal{N}_{m+1}\  \Big|\  \norm{h}_{H^{m+1}(\Sigma)}<\delta/2 \Big\}.
	$$
	
	By the local well-posedness of the semilinear wave equation \eqref{eq:wave simul}
	(see Section \ref{Sec 3}),  for any  $m>
	n+1$  and an admissible coefficient $\tilde f$,  there exists a $\delta>0$,   when   $(\varphi, \psi)\in  H^{m+1}_0(\Omega)
	\times  H^m_0(\Omega)
	$  and  $h\in \mathcal{N}_{m+1}$ satisfy  
	$$\norm{(\varphi, \psi)}_{H^{m+1}(\Omega)\times  H^{m}(\Omega)}+ \norm{h}_{H^{m+1}(\Sigma)}<\delta,$$
	\eqref{eq:wave simul}  has a unique  solution $u\in E^{m+1}$ and   
	$\partial_\nu u\in  H^m(\Sigma)$.  
	
	\medskip

	Now, we give an answer to \textbf{Inverse problem 2} as follows.

	\begin{thm}[Simultaneous recovery by active measurement]\label{Main Thm:Simultaneous}
		Assume that $T>T^*$,   $m>n+1$, and  $\tilde f_1$  and $\tilde f_2$  are 
		two admissible coefficients. There  exists  a  $\delta>0$,  such that for  any  $(\varphi_j, \psi_j)\in H_0^{m+1}(\Omega)\times H^m_0(\Omega)$ $(j=1, 2)$ with
		$\left\|(\varphi_j, \psi_j)\right\|_{H^{m+1}(\Omega)\times  H^{m}(\Omega)}<\delta/2$,   
		denote by $u_j\in E^{m+1}$  the solution to  the following semilinear  wave equation: 
		\begin{align}\label{IBVP of simultaneous recovery}
			\begin{cases}
				u_{j,  tt}-\Delta u_j +\tilde f_j(x,t,u_j)=0  &\text{ in } Q,\\
				u_j= h   &\text{ on }\Sigma,\\
				u_j(x,  0)=\varphi_j(x), \   u_{j, t}(x, 0)=\psi_j(x)    &\text{ in } \Omega,
			\end{cases}
		\end{align}
		for $j=1,2$. Let $\Lambda_{\varphi_j , \psi_j, \tilde f_j}$ be the input-output map defined via \eqref{input-output semilinear} for $j=1,2$, and if 
		$$\Lambda_{\varphi_1 , \psi_1, \tilde f_1}(h)=\Lambda_{\varphi_2, \psi_2,  \tilde f_2}(h), \quad \text{ for all }h\in E_\delta,$$
		then
		$$
		\varphi_1 =\varphi_2,  \  \psi_1=\psi_2 \text{ in } \Omega\quad \text{ and }
		\quad \tilde f_1=\tilde f_2 \text{ in }Q\times \mathbb{R}.
		$$
	\end{thm}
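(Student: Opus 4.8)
\textbf{Proof proposal for Theorem \ref{Main Thm:Simultaneous}.}

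The plan is to combine the higher order linearization scheme with the Runge approximation for the linear wave equation (Section \ref{Sec 5}) and the observability/control results for linear hyperbolic equations (Section \ref{Sec 4}), proceeding in two stages: first recover the initial data $(\varphi_j,\psi_j)$, then recover the nonlinearity $\tilde f_j$. First I would set up the linearization. Since $\tilde f_j(x,t,0)=0$, the choice $h\equiv 0$ forces $u\equiv 0$ to be the common solution, but this carries no boundary information; instead, for a parameter $\varepsilon=(\varepsilon_1,\dots,\varepsilon_N)$ with $|\varepsilon|$ small and boundary data $h=\sum_{\ell=1}^N\varepsilon_\ell h_\ell\in E_\delta$, denote by $u_j^\varepsilon$ the solution of \eqref{IBVP of simultaneous recovery} with data $(\varphi_j,\psi_j,h)$. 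Well-posedness in $E^{m+1}$ (Section \ref{Sec 3}) guarantees $u_j^\varepsilon$ depends smoothly on $\varepsilon$, so one may differentiate in $\varepsilon$ at $\varepsilon=0$. The zeroth order term $w_j:=u_j^0$ solves the semilinear equation with $h=0$ but initial data $(\varphi_j,\psi_j)$; the first order terms $v_j^{(\ell)}=\partial_{\varepsilon_\ell}u_j^\varepsilon|_{\varepsilon=0}$ solve the linearized equation $\partial_t^2 v -\Delta v + \tilde f_j'(x,t,w_j)\,v=0$ with boundary data $h_\ell$ and \emph{zero} initial data, and higher mixed derivatives produce source terms built from products of lower order terms together with the higher Taylor coefficients $\tilde f_j^{(k)}(x,t,w_j)$.

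For the first stage, I would extract the initial data from the zeroth order term. The hypothesis $\Lambda^T_{\varphi_1,\psi_1,\tilde f_1}=\Lambda^T_{\varphi_2,\psi_2,\tilde f_2}$ evaluated at $h=0$ gives $\partial_\nu w_1=\partial_\nu w_2$ on $\Sigma$ and $(w_1,w_{1,t})(\cdot,T)=(w_2,w_{2,t})(\cdot,T)$. Using that each admissible $\tilde f_j$ has support in $\overline\Omega\times[t_1,t_2]$ with $t_1>T_*$, the functions $w_j$ actually solve the \emph{free} wave equation $\partial_t^2 w-\Delta w=0$ on $\Omega\times(0,t_1)$, so on this time interval the difference $w:=w_1-w_2$ is a solution of the homogeneous wave equation with zero Dirichlet data and boundary flux data that, while not literally given on $(0,t_1)$, can be propagated: more precisely one runs the argument underlying Theorem \ref{Main Thm 2}, i.e. the geometric control / observability inequality on $\Gamma_0\times(0,T)$ with $T>T_*$, to conclude that the Cauchy data of $w$ at $t=0$ vanish, hence $\varphi_1=\varphi_2$ and $\psi_1=\psi_2$. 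This is where the constraint $T>T_*=2\max_{\overline\Omega}|x-x_0|$ and condition (\textbf{H}) (with $\sigma=\mathrm{Id}$, $d(x)=|x-x_0|^2$) enter, exactly as in the proofs of Theorems \ref{Main Thm 1}–\ref{Main Thm 2}. I would therefore quote those results rather than redo the Carleman/multiplier estimates.

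For the second stage, once $\varphi_1=\varphi_2=:\varphi$, $\psi_1=\psi_2=:\psi$ are known, the background solutions coincide, $w_1=w_2=:w$, and one recovers $\tilde f_1=\tilde f_2$ by induction on the order of linearization. At first order the two linearized equations share the same Cauchy data (zero) and the equality of input-output maps yields $\partial_\nu v_1^{(\ell)}=\partial_\nu v_2^{(\ell)}$ and $(v_1^{(\ell)},v_{1,t}^{(\ell)})(\cdot,T)=(v_2^{(\ell)},v_{2,t}^{(\ell)})(\cdot,T)$; subtracting, $v:=v_1^{(\ell)}-v_2^{(\ell)}$ solves $\partial_t^2 v-\Delta v+\tilde f_1'(x,t,w)v = \big(\tilde f_2'(x,t,w)-\tilde f_1'(x,t,w)\big)v_2^{(\ell)}$ with vanishing Cauchy data and vanishing boundary Cauchy data, so by an integration-by-parts (Green's identity) pairing against CGO-type solutions of the adjoint linear wave equation — these are recalled in Appendix \ref{Section A} — one obtains $\int_Q \big(\tilde f_1'(x,t,w)-\tilde f_2'(x,t,w)\big)v_2^{(\ell)}\,\overline{v}\, dx\,dt=0$ for a rich family of linear solutions $v_2^{(\ell)}$ and test solutions; the Runge approximation property of Section \ref{Sec 5} lets us choose these linear solutions so that their products are dense in $L^1(Q)$ (or at least separate points), forcing $\tilde f_1'(x,t,w)=\tilde f_2'(x,t,w)$ on $Q$. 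Iterating: at the $k$-th linearization step the lower order Taylor coefficients are already known to agree, the source terms produced by the Faà di Bruno expansion match except for the term $\big(\tilde f_1^{(k)}(x,t,w)-\tilde f_2^{(k)}(x,t,w)\big)\prod_\ell v^{(\ell)}$, and the same Green's-identity-plus-Runge-density argument gives $\tilde f_1^{(k)}(x,t,w)=\tilde f_2^{(k)}(x,t,w)$ on $Q$. Since $w$ takes a full range of values (one can arrange $w$, i.e. really the linearized solutions evaluated along $w$, to sweep out all of $\mathbb C$ near points of $Q$ using the analyticity in $z$ and the freedom in $h$), and $\tilde f_j(x,t,\cdot)$ is holomorphic with $\tilde f_j(x,t,0)=0$, agreement of all Taylor coefficients at $z=0$ yields $\tilde f_1(x,t,z)=\tilde f_2(x,t,z)$ for all $(x,t,z)\in Q\times\mathbb C$.

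The main obstacle I anticipate is the interplay between the \emph{final-time} observations $(u(\cdot,T),u_t(\cdot,T))$ and the boundary flux in making the Runge/observability argument close: one must show that the combined data $\big(\partial_\nu v|_\Sigma, v(\cdot,T), v_t(\cdot,T)\big)=0$ really forces the relevant orthogonality relation over \emph{all} of $Q$ and not merely over a subregion, which requires the geometric condition $T>T_*$ together with the compact support of $\tilde f$ in time inside $(T_*,T)$, so that waves launched from the boundary have time to fill $\Omega$ before interacting with the nonlinearity and the recorded final state captures the remaining energy. A secondary technical point is justifying the differentiation of $u^\varepsilon$ in $\varepsilon$ to all orders in the $E^{m+1}$ topology and controlling the remainder, which is where the smallness threshold $\delta$ and the admissibility (holomorphy into $E^{m+1}$) of $\tilde f_j$ are used; I would handle this by the implicit function theorem on the well-posedness map established in Section \ref{Sec 3}.
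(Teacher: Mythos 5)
Your overall architecture (higher order linearization, Runge approximation against CGO solutions, observability for the linear wave equation) is the same as the paper's, but you order the two stages differently, and the reordering introduces one genuine gap.

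Your Stage 1 is actually a legitimate shortcut that the paper does not take: since an admissible $\tilde f_j$ is supported in $\overline\Omega\times[t_1,t_2]$ with $t_1>T_*$, the zeroth order solutions $w_j$ solve the \emph{free} wave equation on $\Omega\times(0,t_1)$ with zero Dirichlet data, their difference has vanishing flux on $\Gamma_0\times(0,t_1)$ by the hypothesis at $h=0$, and observability on the interval $(0,t_1)$ (which exceeds $T_*$) already forces $\varphi_1=\varphi_2$, $\psi_1=\psi_2$. (Your phrasing "observability on $\Gamma_0\times(0,T)$" should be "$\Gamma_0\times(0,t_1)$": on $(t_1,t_2)$ the two backgrounds do not satisfy a common linear equation, so the estimate can only be run on the free region.) The paper instead recovers the initial data \emph{last}: it first matches all Taylor coefficients along the two backgrounds, then uses the telescoping identity
\begin{align*}
\tilde f_1(x,t,\wt u_1)-\tilde f_2(x,t,\wt u_2)=G(x,t)\,\bigl(\wt u_1-\wt u_2\bigr),\qquad G\in L^\infty(Q),
\end{align*}
to put $\wt u_1-\wt u_2$ into a linear equation on all of $(0,T)$ and only then applies observability.

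The gap is at the start of your Stage 2: from $\varphi_1=\varphi_2$ and $\psi_1=\psi_2$ you conclude "the background solutions coincide, $w_1=w_2=:w$". This does not follow, because $w_1$ and $w_2$ solve \emph{different} equations ($\tilde f_1$ versus $\tilde f_2$), and at this point of the argument the nonlinearities are not yet known to agree; equality of Cauchy and boundary data gives $w_1=w_2$ only on $\Omega\times(0,t_1)$, where both nonlinearities vanish. Consequently your first-order identity, written with a single potential $\tilde f_1'(x,t,w)$ and source $(\tilde f_2'(x,t,w)-\tilde f_1'(x,t,w))v_2^{(\ell)}$, is not justified. The repair is exactly what the paper does: run the linearization with the two distinct potentials $\tilde q_j=\tilde f_{j,u}(x,t,\wt u_j)$, use CGO solutions of \emph{each} linear equation together with the Runge approximation to conclude $\tilde f_{1,u}(x,t,\wt u_1)=\tilde f_{2,u}(x,t,\wt u_2)$ (and inductively for all higher $z$-derivatives), and only afterwards deduce $\wt u_1=\wt u_2$ from the telescoping identity above (given your Stage 1, plain forward uniqueness of the IBVP suffices here, observability is not needed again). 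A related slip at the very end: the matched Taylor coefficients are those at $z=\wt u_j(x,t)$, not at $z=0$, and no "sweeping out of values of $w$" is available (the background is fixed once $h=0$) or needed — holomorphy of $\tilde f_j(x,t,\cdot)$ determines the function from its Taylor coefficients at the single common base point once $\wt u_1=\wt u_2$ is known.
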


	The proof of Theorem \ref{Main Thm:Simultaneous} is mainly based on the higher order linearization  method, which was initiated in \cite{23} for  some  nonlinear hyperbolic equations. Recently, this method 
	has been extended to many  other different problems, such as \cite{20, 21, 26, 27} and
	rich references therein.

	\begin{rmk}
		Now,  we explain  main differences between Theorem $\ref{Main Thm 1}$ and Theorem $\ref{Main Thm:Simultaneous}$.
		\begin{itemize}
			\item[(1)] In Theorem $\ref{Main Thm 1}$, any initial value 
			$(\varphi,\psi)$ $($without smallness conditions$)$ of the initial boundary value problem \eqref{eq:wave1} can be determined uniquely, by utilizing the passive  measurement, under suitable assumptions on  coefficients $f$.
			
			\item[(2)] In Theorem $\ref{Main Thm:Simultaneous}$, by using the active measurement, one can determine 
			small initial data $(\varphi,\psi)$ and admissible  coefficient $\tilde  f$ simultaneously 
			in the initial boundary value problem $(\ref{eq:wave simul})$.
			The smallness conditions in Theorem $\ref{Main Thm:Simultaneous}$ are mainly used to prove the local well-posedness. In the determination of initial data and coefficients,  they are not essential.

		\end{itemize}
	\end{rmk}
	
	Before ending this section,  we give  a corollary for Theorem \ref{Main Thm:Simultaneous}  to show the simultaneously recovering for the following linear wave equation: 
	\begin{align}\label{IBVP of simultaneous recovery linear intr}
		\begin{cases}
			u_{  tt}-\Delta u +q u=0  &\text{ in } Q,\\
			u= h   &\text{ on }\Sigma,\\
			u(x,  0)=\varphi(x), \   u_{ t}(x, 0)=\psi(x)    &\text{ in } \Omega,
		\end{cases}
	\end{align}
	where $q\in E^{m+1}$, $\varphi\in H_0^{m+1}(\Omega)$, $\psi\in H_0^m(\Omega)$ and $h\in \mathcal{N}_{m+1}$ for $m>n+1$.
	By   Lemma \ref{lem*},    (\ref{IBVP of simultaneous recovery linear intr})
	is  well-posed with $ u\in  E^{m+1} $ and $\partial_\nu u\in H^m(\Sigma)$.
	Due to linearity,  we do not need to impose any smallness condition for both initial data and boundary inputs.

	Now,  for any $\varphi\in H_0^{m+1}(\Omega)$, $\psi\in H_0^m(\Omega)$  and  
	$q\in E^{m+1}$,   we  define the corresponding input-output map $\Lambda_{\varphi,\psi,q}$ of  \eqref{IBVP of simultaneous recovery linear intr} via 
	\begin{align}\label{input-output linear}
		\Lambda_{\varphi,\psi,  q}(h) =\left(
		\partial_\nu  u\Big |_{\Sigma}, \ u(\cdot, T), \  
		u_t(\cdot, T)\right), \quad \text{ for all } h \in \mathcal{N}_{m+1},
	\end{align} 
	where $u\in E^{m+1}$ is the  solution to (\ref{IBVP of simultaneous recovery linear intr}).   In order to study this inverse problem by Theorem  \ref{Main Thm:Simultaneous},   we  still assume that 
	$t_1$ and $t_2$ are two positive  constants  with $T^*<t_1<t_2<T$ as in Definition \ref{Def: admissible coefficients}, and $q\in E^{m+1}$ with $\supp \, q\subseteq \Omega \times [t_1,t_2]$. Then the following result holds, which may be regarded as a corollary 
	of  Theorem  \ref{Main Thm:Simultaneous}  in the case that $\tilde f(x,t,u)=q(x,t)u$.
	
	\begin{cor}[Simultaneous recovery for linear wave equations]\label{Main Thm:Simultaneous linear}
		Assume that $T>T^*$,   $m>n+1$ and $q_j\in E^{m+1}$ with $\supp \, q_j\subseteq \Omega \times [t_1,t_2] $ for $j=1,2$.
		For  any  $(\varphi_j, \psi_j)\in H_0^{m+1}(\Omega)\times H^m_0(\Omega)$ $(j=1, 2)$, denote by $u_j\in E^{m+1}$  the solution to  the following linear wave equation: 
		\begin{align}\label{IBVP of simultaneous recovery linear}
			\begin{cases}
				u_{j,  tt}-\Delta u_j +q_j u_j=0  &\text{ in } Q,\\
				u_j= h   &\text{ on }\Sigma,\\
				u_j(x,  0)=\varphi_j(x), \   u_{j, t}(x, 0)=\psi_j(x)    &\text{ in } \Omega,
			\end{cases}
		\end{align}
		for $j=1,2$. Let $\Lambda_{\varphi_j , \psi_j, q_j}$ be the input-output map defined via \eqref{input-output linear} for $j=1,2$,and if 
		$$\Lambda_{\varphi_1 , \psi_1, q_1}(h)=\Lambda_{\varphi_2, \psi_2,  q_2}(h), \quad \text{ for all }h\in E_\delta,$$
		then
		$$
		\varphi_1 =\varphi_2,  \  \psi_1=\psi_2 \text{ in } \Omega\quad \text{ and }
		\quad q_1=q_2 \text{ in }Q.
		$$
	\end{cor}

	\section{Well-posedness of  semilinear wave equations}\label{Sec 3}
	
	This section  is  devoted to  investigating  the well-posedness of the semilinear wave equations \eqref{eq:wave1} and  \eqref{eq:wave simul}.  
	The global well-posedness for \eqref{eq:wave1} under the superlinear increasing condition (\ref{condition of nonlinear f at infinity data}) and local well-posedness for \eqref{eq:wave simul}  under  admissible 
	coefficients conditions are established, respectively. Throughout this paper, we denote by $C$ a  positive constant,  which is independent of solutions to involved equations  and may be different from line  to line.  Furthermore, if the constant $C$ depends on some factor, for example, $C$ depends on some real number $p$,  we will write $C=C(p)$.

	\subsection{Local  well-posedness with small data}
	This subsection is devoted to the well-posedness of the semilinear wave equation
	\eqref{eq:wave simul}.   Similar  results have been investigated in 
	some known works for   different structures on  $\tilde f$, 
	see for instance \cite{43}.

	To begin with, recall the definition  of  the  energy space 
	\begin{align*}
		E^m = \bigcap _{ k=0 }^m C^k ([0,T];H^{m-k}(\Omega)).
	\end{align*}
	By the Sobolev embedding and  \cite[Definition 3.5]{4},   the  above  space $E^m$ is  an algebra, due to 
	\begin{align*}
		\norm{\phi \psi}_{E^m} \leq C_m \norm{\phi}_{E^m} \norm{\psi}_{E^m},  \quad \text{ for any }  \phi, \psi \in E^m,
	\end{align*}
	for  any integer $m>n+1$.
	Indeed, the algebra property of  function spaces plays an essential role in the study of 
	the well-posedness for many nonlinear partial differential equations. For example,  in \cite{10,  20, 26,  27},  suitable  H\"older continuous spaces were utilized to prove the well-posedness for   semilinear elliptic equations. 
	
	\medskip
	
	Next, we recall  a known well-posedness result for the following wave equation:
	\begin{align}\label{linear-hyper in well-posed}
		\begin{cases}
			v_{tt}-\Delta v=g & \text{ in }Q,\\
			v=h & \text{ on }\Sigma,\\
			v(x, 0)=\varphi(x), \quad  v_t(x,  0)=\psi(x)  &\text{ in }\Omega.	
		\end{cases}
	\end{align}
	The  compatibility conditions up to order $m$\footnote{One needs to check the compatible conditions for $\p_t^kh(x,0)$ for $x\in \Gamma$ and for $k=0,1,\ldots, m$ to get higher order regularity estimates.}  mean that
	\begin{align}\label{compatibility conditions}
		\begin{cases}
			h(x, 0)=\varphi(x), \  
			h_t(x,0)=\psi(x)\  \mbox{on }\Gamma, \\[2mm]
			h_{tt}(x,0)=\Delta \varphi(x)+g(x,0)\  \mbox{ on }\Gamma,\\[2mm]
			h_{ttt}(x, 0)=\Delta\psi(x)+g_t(x, 0)\  \mbox{ on }\Gamma,\\[2mm]
			h_{tttt}(x, 0)=\Delta^2\varphi(x)+\Delta g(x, 0)+g_{tt}(x, 0)\  \mbox{ on }\Gamma,\\[2mm]
			\mbox{and    higher order derivatives of  $h$ up to 
				order $m$ with respect to time. }
		\end{cases}
	\end{align}
	By \cite[Theorem 2.45]{17}, the following well-posedness result holds for 
	(\ref{linear-hyper in well-posed}).

	\begin{lem}\label{lem}	Let $m$ be a non-negative  integer and $T>0$. 
		For any   $\vphi\in H^{m+1}(\Omega)$, $\psi \in H^m(\Omega)$, $h\in H^{m+1}(\Sigma)$  and 
		$g\in L^1(0, T; H^m(\Omega))$  with 
		$\partial_t^m g\in L^1(0, T; L^2(\Omega))$,  if the compatibility conditions 
		$(\ref{compatibility conditions})$ hold,  
		\eqref{linear-hyper in well-posed} admits  a unique solution 
		$$v\in C([0, T]; H^{m+1}(\Omega))\cap 
		C^{m+1}([0, T]; L^2(\Omega))$$ and $\partial_\nu v\in H^m(\Sigma)$. 
		Moreover,  
		\begin{align*}
			\begin{split}
				&\norm{v}_{C([0, T]; H^{m+1}(\Omega))}
				+\norm{v}_{C^{m+1}([0, T]; L^2(\Omega))}
				+\norm{\partial_\nu v}_{H^m(\Sigma)}\\[2mm]
				\leq &
				Ce^{CT}\Big(
				\norm{g}_{L^1(0, T; H^m(\Omega))}+
				\norm{\partial_t^m g}_{L^1(0, T; L^2(\Omega))}\\[2mm]
				&\quad\quad\quad\quad
				+\norm{\varphi}_{H^{m+1}(\Omega)}
				+\norm{\psi}_{H^m(\Omega)}
				+\norm{h}_{H^{m+1}(\Sigma)}
				\Big).
			\end{split}
		\end{align*}
	\end{lem}

	Based on Lemma \ref{lem},   we have the well-posedness result 
	for the following linear  wave  equation: 
	\begin{align}\label{linear-hyper in well-posed11}
		\begin{cases}
			v_{tt}-\Delta v+qv=g_1 & \text{ in }Q,\\
			v=h & \text{ on }\Sigma,\\
			v(x, 0)=\varphi(x), \  v_t(x,  0)=\psi(x)  &\text{ in }\Omega,		
		\end{cases}
	\end{align}
	where 
	$q\in E^m$. 
	\begin{lem}\label{lem*}
		Let $m>n+1$  and $T>0$. 
		For any   $\vphi\in H^{m+1}_0(\Omega)$, $\psi \in H^m_0(\Omega)$, 
		$h\in \mathcal{N}_{m+1}$ $($see $(\ref{BB}))$,  $q\in E^m$,  and 
		$g_1\in E^m$  with 
		$\partial_t^k g_1(\cdot, 0)\in H^{m-k}_0(\Omega)$ for 
		$k=0, 1, \cdots, m-2$,   
		\eqref{linear-hyper in well-posed11} admits a unique solution 
		$$v\in E^{m+1}\  \mbox{ and }\   \partial_\nu v\in H^m(\Sigma).$$ 
		Moreover, 
		\begin{align}\label{QQQQ}
			\begin{split}
				&\displaystyle\norm{v}_{E^{m+1}}
				+\norm{\partial_\nu v}_{H^m(\Sigma)}\\[2mm]
				\displaystyle\leq &
				Ce^{CT}\left(
				\sum\limits_{k=0}^{m}\norm{\partial_t^k g_1}_{L^1(0, T; H^{m-k}(\Omega))}
				+\norm{\varphi}_{H^{m+1}(\Omega)}
				+\norm{\psi}_{H^m(\Omega)}
				+\norm{h}_{H^{m+1}(\Sigma)}
				\right).
			\end{split}
		\end{align}
	\end{lem}

	\begin{proof}
		First,  we consider the case of $q=0$.
		By Lemma \ref{lem} and the definition of $E^{m+1}$, 
		it suffices to prove that for any positive integer $k\in (0, m+1)$, 
		$$
		\partial_t^k v(\cdot, \cdot)\in C([0, T]; H^{m+1-k}(\Omega)).
		$$
		Set $u=\partial_t^k v$ and  it satisfies the following  equation: 
		\begin{align*}
			\begin{cases}
				u_{tt}-\Delta u=\partial_t^k g_1& \text{ in }Q,\\
				u=\partial_t^k h & \text{ on }\Sigma.	
			\end{cases}
		\end{align*}
		Since
		$$
		g_1\in  E^m, \quad h\in H^{m+1}(\Sigma) \    \mbox{ and }\   
		(\varphi, \psi)\in H^{m+1}(\Omega)\times  H^m(\Omega),
		$$
		we have that
		\begin{eqnarray*}
			&&u(\cdot, 0)\in H^{m+1-k}(\Omega), \    
			u_t(\cdot, 0)\in H^{m-k}(\Omega),\  
			\partial_t^k h\in H^{m+1-k}(\Sigma),\\[2mm]
			&&\partial_t^k g_1\in L^1(0, T; H^{m-k}(\Omega))  \  \mbox{ and }\  
			\partial_t^m g_1\in L^1(0, T; L^2(\Omega)).
		\end{eqnarray*}
		And the compatibility conditions of order $m-k$ hold. 
		By Lemma \ref{lem}, 
		$$
		u=\partial_t^k v\in C([0, T]; H^{m+1-k}(\Omega)) \  \mbox{  for any integer  }
		k\in (0, m+1).
		$$
		Also,  the estimate (\ref{QQQQ}) remains  true for the case of $q=0$.
		
		\medskip

		Next,  consider the general case of    $q\in E^m$.  Define the set
		$$
		\mathcal{K}_1=\Big\{
		v\in E^m\  \Big|\  
		\partial_t^k v(\cdot, 0)\in H^{m-k}_0(\Omega), \mbox{ for }k=
		0, 1, \cdots, m-2  \Big\}.
		$$  For any $v\in \mathcal{K}_1$,  consider the following wave equation:
		\begin{align}\label{EL1}
			\begin{cases}
				w_{tt}-\Delta w=g_1-qv& \text{ in }Q,\\
				w=h & \text{ on }\Sigma,\\
				w(x, 0)=\varphi(x),\  w_t(x,  0)=\psi(x)  &\text{ in }\Omega.	
			\end{cases}
		\end{align}
		By $g_1, v, q\in  E^m$,  it follows that
		$g_1-vq\in E^m$.
		Also,  
		$$
		\partial_t^k [g_1(\cdot, 0)-(qv)(\cdot, 0)]\in H^{m-k}_0(\Omega), \mbox{ for }k=
		0, 1, \cdots, m-2.
		$$
		By the  well-posedness result in  the case of $q=0$,
		\eqref{EL1} admits a unique solution 
		$$w\in E^{m+1}\  \mbox{ and }\   \partial_\nu w\in H^m(\Sigma).$$ 
		Moreover, 
		\begin{align}\label{QQ4}
			\begin{split}
				\displaystyle\norm{w}_{E^{m+1}}
				+\norm{\partial_\nu w}_{H^m(\Sigma)}\displaystyle\leq
				&Ce^{CT}\Big(
				\sum\limits_{k=0}^{m}\norm{\partial_t^k(g_1-qv)}_{L^1(0, T; H^{m-k}
					(\Omega))}\\[2mm]
				&\quad\quad\quad\quad+\norm{\varphi}_{H^{m+1}(\Omega)}
				+\norm{\psi}_{H^m(\Omega)}
				+\norm{h}_{H^{m+1}(\Sigma)}
				\Big).
			\end{split}
		\end{align}
		
		Define the mapping 
		$$\mathcal{L}_1: \mathcal{K}_1\rightarrow \mathcal{K}_1,\quad
		\mathcal{L}_1(v)=w,$$
		where $w$ is the solution to (\ref{EL1}) associated to $v\in \mathcal{K}_1$.
		For any $v_1, v_2\in \mathcal{K}_1$,  denote by $w_1$ and $w_2$ 
		the associated solutions to (\ref{EL1}).  By (\ref{QQ4}),  we obtain that
		\begin{align*}
			&\displaystyle\norm{w_1-w_2}_{E^{m+1}}
			+\norm{\partial_\nu(w_1-w_2)}_{H^m(\Sigma)}\\
			\leq
			&Ce^{CT}\sum\limits_{k=0}^{m}\norm{\partial_t^k[q(v_1-v_2)]}_{L^1(0, T; H^{m-k}
				(\Omega))}\\
			\leq &CTe^{CT}
			\sum\limits_{k=0}^{m}\norm{\partial_t^k[q(v_1-v_2)]}_{C([0, T]; H^{m-k}
				(\Omega))}\\
			=&
			CTe^{CT}
			\norm{q(v_1-v_2)}_{E^m}
			\leq  
			CTe^{CT}
			\norm{q}_{E^m}\norm{v_1-v_2}_{E^m}.
		\end{align*}
		If $T$ is sufficiently small such that $CTe^{CT}\norm{q}_{E^m}<1$, 
		then by the Banach fixed point theorem,  $\mathcal{L}_1$ has a unique 
		fixed point $v\in \mathcal{K}_1$. 
		Since \eqref{linear-hyper in well-posed11} is a linear equation, by a rescaling with respect to the time variable, we can get the result for any $T>0$.
	\end{proof}

	The main  result of this subsection is stated as follows.

	\begin{thm}[Local well-posedness]\label{thm:well-posedness}
		Assume that  $m>n+1$ and $\tilde  f$ is an  
		admissible  coefficient.  Then there exists a $\delta>0$,  such that for any $(h,\varphi,\psi)$ in the set 
		$$U_\delta= \Big\{ (h,\varphi,\psi) \in \mathcal{N}_{m+1}\times H^{m+1}_0(\Omega)\times H^m_0(\Omega)\  \Big|\   
		\|h\|_{H^{m+1}(\Sigma)}+\|\varphi\|_{H^{m+1}(\Omega)}+\|\psi\|_{H^m(\Omega)}<\delta \Big\},
		$$
		\eqref{eq:wave simul} admits a unique solution $u\in E^{m+1}$  satisfying that $\p _\nu u \in H^m(\Sigma)$ and  
		\begin{equation}\label{EEE}
			\norm{u}_{E^{m+1}}+\norm{u}_{C(\overline{Q})}+\norm{\p_\nu u}_{H^m(\Sigma)}\le C \Big(\norm{h}_{H^{m+1}(\Sigma)}+\norm{\varphi}_{H^{m+1}(\Omega)}+\norm{\psi}_{H^m(\Omega)}\Big),
		\end{equation}
		where  $C$ is  a positive constant independent of  $u, h, \varphi$ and $\psi$. Moreover, the following solution map is $C^\infty$ Fr\'echet differentiable:
		\begin{align*}
			&S: U_\delta \to E^{m+1},  \qquad (h,\varphi,\psi)\mapsto u.	\end{align*}
	\end{thm}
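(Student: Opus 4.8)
The plan is to prove Theorem \ref{thm:well-posedness} by a standard fixed-point (contraction mapping) argument in the algebra $E^{m+1}$, using the linear well-posedness result Lemma \ref{lem} as the solution operator for the linearized problem. First I would set up the right function space: given small data $(h,\varphi,\psi)\in U_\delta$, define a map $\Phi$ that sends $w\in E^{m+1}$ to the solution $v=\Phi(w)$ of the \emph{linear} inhomogeneous wave equation
\begin{align*}
\begin{cases}
v_{tt}-\Delta v=-\tilde f(x,t,w) &\text{ in } Q,\\
v=h &\text{ on }\Sigma,\\
v(x,0)=\varphi(x),\ v_t(x,0)=\psi(x) &\text{ in }\Omega.
\end{cases}
\end{align*}
For this to be well-defined via Lemma \ref{lem}, one must check that $g:=-\tilde f(\cdot,\cdot,w)$ lies in $L^1(0,T;H^m(\Omega))$ with $\partial_t^k g\in L^1(0,T;H^{m-k}(\Omega))$, and that the compatibility conditions up to order $m$ hold. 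The source regularity follows from the holomorphy assumption \eqref{condition of the nonlinear term f(x,t,z)} together with the algebra property of $E^{m+1}$: writing the Taylor expansion $\tilde f(x,t,z)=\sum_{k\ge 1}\tilde f^{(k)}(x,t,0)z^k/k!$ (note the $k=0$ term vanishes since $\tilde f(x,t,0)=0$), one gets an estimate $\norm{\tilde f(\cdot,\cdot,w)}_{E^{m+1}}\le C(\norm{w}_{E^{m+1}})\,\norm{w}_{E^{m+1}}$ with $C$ locally bounded, hence a bound on a small ball $\norm{w}_{E^{m+1}}\le R$. The compatibility conditions hold because $\varphi,\psi\in H_0^{m+1}(\Omega)\times H_0^m(\Omega)$ vanish to the appropriate order on $\Gamma$, $h\in\mathcal N_{m+1}$ has vanishing time-traces on $\Gamma$ in the required Sobolev sense, and $\tilde f$ is compactly supported away from $t=0$ (indeed $\supp\tilde f\subseteq\overline\Omega\times[t_1,t_2]$ with $t_1>0$), so all $\partial_t^k g(\cdot,0)=0$.

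Next I would establish that $\Phi$ maps a suitable small closed ball $B_R=\{w\in E^{m+1}:\norm{w}_{E^{m+1}}\le R\}$ into itself and is a contraction there. Self-mapping follows by combining the linear estimate from Lemma \ref{lem} (which, tracked through its proof or via the energy estimates in \cite{KKL2001inverse}, gives $\norm{v}_{E^{m+1}}+\norm{\partial_\nu v}_{H^m(\Sigma)}\le C_0(\norm{h}_{H^{m+1}(\Sigma)}+\norm{\varphi}_{H^{m+1}(\Omega)}+\norm{\psi}_{H^m(\Omega)}+\norm{g}_{\ast})$, where $\norm{g}_\ast$ is the mixed $L^1$-Sobolev norm) with the nonlinear bound $\norm{\tilde f(\cdot,\cdot,w)}_\ast\le C(R)R^2$ for $w\in B_R$ — the quadratic gain coming from the absence of the constant and (after reorganizing) linear terms, i.e. every monomial in the Taylor series is at least quadratic in $w$. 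Choosing $R$ small and then $\delta$ small relative to $R$ (so that $C_0\delta+C_0 C(R)R^2\le R$) gives $\Phi(B_R)\subseteq B_R$. For the contraction, one estimates $\Phi(w_1)-\Phi(w_2)$, which solves the linear wave equation with zero data and source $-(\tilde f(\cdot,\cdot,w_1)-\tilde f(\cdot,\cdot,w_2))$; writing this difference as $(w_1-w_2)\int_0^1\partial_z\tilde f(\cdot,\cdot,sw_1+(1-s)w_2)\,ds$ and using the algebra property plus holomorphy, one obtains $\norm{\tilde f(\cdot,\cdot,w_1)-\tilde f(\cdot,\cdot,w_2)}_\ast\le C(R)R\,\norm{w_1-w_2}_{E^{m+1}}$, so for $R$ small the Lipschitz constant $C_0C(R)R<1$. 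Banach's fixed point theorem then yields the unique solution $u\in B_R$; uniqueness in all of $E^{m+1}$ follows from a similar Gronwall/energy argument (or by noting any two solutions with the same small data both lie in some common small ball). The estimate \eqref{EEE} is read off from the fixed-point identity $u=\Phi(u)$ and the bound $\norm{u}_{C(\overline Q)}\le C\norm{u}_{E^{m+1}}$ coming from the Sobolev embedding $H^{m+1}(\Omega)\hookrightarrow C(\overline\Omega)$ since $m+1>n/2+1>n/2$; the trace $\partial_\nu u\in H^m(\Sigma)$ and its bound come directly from Lemma \ref{lem} applied to $v=u$ with source $-\tilde f(\cdot,\cdot,u)$.

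For the smoothness of the solution map $S:(h,\varphi,\psi)\mapsto u$, I would invoke the implicit function theorem in Banach spaces. Define $G:U_\delta\times E^{m+1}\to (\text{data space})$ by $G(h,\varphi,\psi,u)=\big(u_{tt}-\Delta u+\tilde f(\cdot,\cdot,u),\ u|_\Sigma-h,\ u(\cdot,0)-\varphi,\ u_t(\cdot,0)-\psi\big)$, with the target chosen so that each component lands in the appropriate space (the first component in the mixed $L^1$-Sobolev class, etc.) and $G=0$ characterizes solutions. The map $G$ is $C^\infty$ in $u$: the only nonlinear term is $u\mapsto\tilde f(\cdot,\cdot,u)$, and holomorphy of $z\mapsto\tilde f(\cdot,\cdot,z)$ with values in $E^{m+1}$ together with the algebra structure makes the Nemytskii-type operator $u\mapsto\tilde f(\cdot,\cdot,u)$ real-analytic (hence $C^\infty$) on a neighborhood of $0$ in $E^{m+1}$, with derivative $v\mapsto\partial_z\tilde f(\cdot,\cdot,u)v$. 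The partial derivative $\partial_u G$ at a solution is the linear wave operator $v\mapsto(v_{tt}-\Delta v+\partial_z\tilde f(\cdot,\cdot,u)v,\ v|_\Sigma,\ v(\cdot,0),\ v_t(\cdot,0))$, which is a bounded isomorphism by Lemma \ref{lem} (absorbing the lower-order coefficient $\partial_z\tilde f(\cdot,\cdot,u)\in E^{m+1}\hookrightarrow$ bounded, into the source via a Gronwall argument, or citing \cite{KKL2001inverse} directly for variable-lower-order wave equations). The implicit function theorem then gives that the solution $u$ depends $C^\infty$ on $(h,\varphi,\psi)$ near $0$. The main obstacle I anticipate is bookkeeping the compatibility conditions and the precise mixed-regularity source space so that Lemma \ref{lem} applies cleanly and $\partial_u G$ is genuinely an isomorphism onto the chosen target — in particular verifying that $\tilde f(\cdot,\cdot,w)$ and its differences satisfy the order-$m$ compatibility conditions, which is where the compact-support-in-time hypothesis on $\tilde f$ (Definition \ref{Def: admissible coefficients}(2)) does the essential work; the rest is routine once the spaces are pinned down.
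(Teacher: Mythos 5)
Your second half (the implicit function theorem argument for smoothness of $S$) is essentially the paper's entire proof: the authors define $F(h,\varphi,\psi,u)=(u_{tt}-\Delta u+\tilde f(x,t,u),\,u|_\Sigma-h,\,u(\cdot,0)-\varphi,\,u_t(\cdot,0)-\psi)$ between carefully chosen Banach spaces, verify weak holomorphy of $u\mapsto\tilde f(\cdot,\cdot,u)$ via the Taylor expansion and the algebra property of $E^{m+1}$, show $F_u(0,0,0,0)$ is an isomorphism using the linear well-posedness of $w_{tt}-\Delta w+qw=g_1$ with $q=\tilde f_u(\cdot,\cdot,0)\in E^{m+1}$, and obtain existence, uniqueness, the estimate \eqref{EEE} (from local Lipschitz continuity of $S$ and $S(0,0,0)=0$) and $C^\infty$ dependence all at once from the IFT. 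So your IFT route is not an add-on; it is the proof, and your separate contraction-mapping construction of the solution is redundant.

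More importantly, the contraction step as you wrote it has a genuine flaw. The admissibility condition only imposes $\tilde f(x,t,0)=0$; it does \emph{not} kill the linear term $\tilde f^{(1)}(x,t,0)\,z$ of the Taylor expansion. Consequently your claimed bounds $\|\tilde f(\cdot,\cdot,w)\|_\ast\le C(R)R^2$ and $\|\tilde f(\cdot,\cdot,w_1)-\tilde f(\cdot,\cdot,w_2)\|_\ast\le C(R)R\,\|w_1-w_2\|_{E^{m+1}}$ are false in general: the integrand $\int_0^1\partial_z\tilde f(\cdot,\cdot,sw_1+(1-s)w_2)\,ds$ tends to $\partial_z\tilde f(\cdot,\cdot,0)$ as $R\to0$, so the Lipschitz constant of $w\mapsto\tilde f(\cdot,\cdot,w)$ is bounded below by (a constant times) $\|\tilde f^{(1)}(\cdot,\cdot,0)\|_{E^{m+1}}$, independently of $R$. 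Since $T$ is fixed (no smallness of the time interval to exploit) and $C_0$ is the fixed constant of the linear estimate, $C_0\cdot\|\tilde f^{(1)}(\cdot,\cdot,0)\|$ need not be less than $1$, and neither the self-mapping $\Phi(B_R)\subseteq B_R$ nor the contraction closes. The fix is exactly the reorganization you allude to but do not carry out: put the linear-in-$z$ part into the operator, i.e.\ define $\Phi(w)$ as the solution of $v_{tt}-\Delta v+\tilde f^{(1)}(x,t,0)v=-\bigl(\tilde f(x,t,w)-\tilde f^{(1)}(x,t,0)w\bigr)$, where the right-hand side is genuinely $O(\|w\|^2)$; this requires the linear well-posedness result for the wave equation with potential $q\in E^{m+1}$ (the paper's equation \eqref{linear-hyper in well-posed11}), not Lemma \ref{lem} as literally stated. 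Your compatibility-condition bookkeeping (using $H_0^{m+1}\times H_0^m$ data, $h\in\mathcal N_{m+1}$, and the compact time-support of $\tilde f$ to force $\partial_t^kg(\cdot,0)=0$) is correct and matches the paper's.
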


	\begin{proof}
		Similar to arguments  in \cite{20,  27}, we prove the well-posedness of  \eqref{eq:wave simul} by  the implicit function theorem in Banach spaces.

		First,   set 	\begin{align*}
			&X_1=\mathcal{N}_{m+1}\times H^{m+1}_0(\Omega)\times H^m_0(\Omega), \\
			&X_2=\Big\{ u\in  E^{m+1} \  \Big|\     u\big|_{\Sigma}\in \mathcal{N}_{m+1},    
			\p_\nu u\in H^{m}(\Sigma),  u(\cdot, 0)\in H^{m+1}_0(\Omega),  u_t(\cdot, 0)\in H^m_0(\Omega),  \\
			&\quad\quad\quad\quad\quad\quad\quad\quad\quad u_{tt}-\Delta  u\in E^m,
			\partial_t^k(u_{tt}-\Delta u)(\cdot, 0)\in H^{m-k}_0(\Omega), k=0, 1, \cdots, m-2  \Big\},
		\end{align*}
		where 
		$$
		\|u\|_{X_2}=
		\|u\|_{E^{m+1}}+\|\partial_\nu u\|_{H^m(\Sigma)}+\|u\|_{H^{m+1}(\Sigma)}
		+\|u_{tt}-\Delta  u\|_{E^m}.
		$$
		Note that $X_2$ is nonempty and indeed $C^\infty_0(Q)\subseteq X_2$.
		Meanwhile, let us write
		\begin{align*}
			&X_3= \Big\{  g\in E^m\   \Big|\   \partial_t^kg(\cdot, 0)\in H^{m-k}_0(\Omega),\   
			\forall \   k=0, 1, \cdots, m-2 \Big\}\times X_1. 
		\end{align*}
		Consider the following map: 
		\begin{align}\label{map of well-posedness}
			\begin{split}
				&F: X_1 \times X_2 \to X_3 ,\\
				&F(h,\varphi,\psi,u)=\LC  u_{tt} - \Delta u + \tilde f(x, t, u),\  u\big|_{\Sigma}-h, \  u(\cdot,  0)-\varphi, \    u_t(\cdot, 0)-\psi  \RC,
			\end{split}
		\end{align}
		where $(h,\varphi,\psi)\in X_1$ and $u\in X_2$.    By the condition \eqref{condition of the nonlinear term f(x,t,z)} for  $\tilde f$, 
		for any positive integer $k$  and   positive constant $R$,
		\begin{align}
			\big\|\tilde f^{(k)}(\cdot, \cdot, 0)\big\|_{E^{m+1}}\leq \frac{k!}{R^k} 
			\sup_{|s|=R} \big\|\tilde f(\cdot,\cdot, s)\big\|_{E^{m+1}}.
		\end{align}
		Since $E^{m+1}$ is an algebra, it follows that  for any $u\in X_2$, 
		\begin{align*}
			\big\|\tilde f(\cdot, \cdot, u(\cdot, \cdot))\big\|_{E^{m+1}}\leq 
			&\sum\limits_{k=0}^{\infty}\frac{1}{k!} \big\|\tilde f^{(k)}(\cdot, \cdot, 0)
			\big\|_{E^{m+1}}\norm{u}^k_{E^{m+1}}\\
			\leq& \sum\limits_{k=0}^{\infty} \frac{1}{R^k}\norm{u}^k_{E^{m+1}}\sup_{|s|=R}
			\big\|\tilde f(\cdot,\cdot, s)\big\|_{E^{m+1}}.
		\end{align*}
		Choose $R=2\LC \norm{u}_{E^{m+1}}+1\RC $. Then,  $\tilde f(\cdot, \cdot, u(\cdot, \cdot))\in E^{m+1}$ and 
		\[
		\big\|\tilde f(\cdot, \cdot, u(\cdot, \cdot))\big\|_{E^{m+1}} \leq C\sup_{|s|=2\LC \norm{u}_{E^{m+1}}+1\RC }\big\|\tilde f(\cdot, \cdot, s)\big\|_{E^{m+1}}.
		\]
		By using the definition of $X_2$,  $u_{tt}-\Delta u\in E^m$ and  therefore, 
		$u_{tt}-\Delta u+\tilde f(\cdot, \cdot, u(\cdot, \cdot))\in E^m$.
		Also, by   the admissible coefficient  condition on $\tilde f$, 
		$$
		\left.\partial_t^k\tilde f(\cdot, \cdot, u(\cdot, \cdot))\right|_{t=0}=0, \   \mbox{ for any }\ k=0, 1, \cdots, m-2,\mbox{ and }u\in X_2.
		$$
		Hence, the  map $F$ is well-defined. 	
		
		\medskip
		
		Now,  we prove that $F$ is locally bounded.
		Indeed,  for any $M>0$,   when $(h, \varphi, \psi, u)\in X_1\times X_2$ 
		with $\norm{(h, \varphi, \psi, u)}_{X_1\times X_2}\leq M$, 
		\begin{align*}
			&\norm{F(h, \varphi, \psi, u)}_{X_3}\\
			\leq &\norm{u_{tt}-\Delta u+\tilde f(\cdot, \cdot, u(\cdot, \cdot))}_{E^m}
			+\norm{u-h}_{H^{m+1}(\Sigma)}\\
			& \quad +\norm{u(\cdot, 0)-\varphi}_{H^{m+1}(\Omega)}
			+\norm{u_t(\cdot, 0)-\psi}_{H^m(\Omega)}\\
			\leq &\norm{\tilde f(\cdot, \cdot, u(\cdot, \cdot))}_{E^m}+C\norm{u}_{X_2}
			+\norm{h}_{H^{m+1}(\Sigma)}+
			\norm{\varphi}_{H^{m+1}(\Omega)}+\norm{\psi}_{H^m(\Omega)}\\
			\leq &C\sup\limits_{|s|=2(1+\norm{u}_{E^{m+1}})}
			\norm{\tilde f(\cdot, \cdot, s)}_{E^{m+1}}+C\norm{u}_{X_2}
			+\norm{h}_{H^{m+1}(\Sigma)}+
			\norm{\varphi}_{H^{m+1}(\Omega)}+\norm{\psi}_{H^m(\Omega)}\\
			\leq& C\sup\limits_{|s|=2(1+M)}
			\norm{\tilde f(\cdot, \cdot, s)}_{E^{m+1}}+(C+3)M
			<\infty.
		\end{align*}
		Next,  we verify the weak holomorphy of $F$ (see \cite[page 133]{44}).  It is 
		sufficient  that for any $(h_0,\varphi_0,\psi_0 , u_0), (h,\varphi, \psi, u)\in X_1\times X_2$,  the map 
		\[
		\lambda \mapsto F\LC (h_0,\varphi_0,\psi_0 , u_0)+\lambda (h,\varphi, \psi, u)\RC
		\]
		is holomorphic in a neighborhood of the origin in  with values in $X_3$. It  suffices to check that the map 
		\[
		\lambda \mapsto \tilde f(x,t,u_0(x,t)+\lambda u(x,t)) 
		\]
		is holomorphic in  a neighborhood of the origin in $\mathbb C$ with values in $E^{m}$. 
		This follows  from the convergence of 
		the series 
		\[
		\sum_{k=0}^\infty \frac{\tilde f^{(k)}(x,t, 0)}{k!} \Big[u_0(x,  t)+\lambda u(x,  t)\Big]^k
		\]
		in $E^{m+1}$, locally uniformly in $\lambda \in \mathbb C$. Hence, $F$  is holomorphic in $X_1\times X_2$.
		
		\medskip 
		
		Moreover,  notice that $F(0,0,0,0)=0$ and $F_u(0,0,0,0):  X_2 \to X_3$  is  defined by
		\[
		F_u(0,0,0,0)w= \left(w_{tt}-\Delta w +\tilde f_u(\cdot,\cdot,0)w, w\big|_{\Sigma}, w(\cdot, 0),  w_{t}(\cdot, 0)\right),  \quad \text{ for all }   w\in X_2.
		\]
		Indeed,  by  the definition of $X_2$,  for any $w\in X_2$,  $w_{tt}-\Delta  w\in E^{m}$. Since 	$\tilde f_u(\cdot, \cdot, 0)\in E^{m+1}$, it holds that 
		$w_{tt}-\Delta w +\tilde f_u(\cdot,\cdot,0)w\in E^m$.
		Also, by 
		the admissible coefficient condition  on $\tilde f$, 
		$$
		\partial_t^k\left[w_{tt}-\Delta w +\tilde f_u(\cdot,\cdot,0)w \right](\cdot, 0)\in H^{m-k}_0(\Omega), \quad
		\forall \   k=0, 1, \cdots, m-2.
		$$
		Hence, the map $F_u(0,0,0,0)$ is well-defined. 
		Furthermore, by the well-posedness of the linear wave equation 
		\eqref{linear-hyper in well-posed11} in Lemma \ref{lem*} with $$q=\tilde f_u(\cdot,\cdot,0)\  
		\mbox{ and }\  g_1\in \mathcal{H}=\Big\{  g\in E^m\   \Big|\   \partial_t^kg(\cdot, 0)\in
		H^{m-k}_0(\Omega),\   \forall \   k=0, 1, \cdots, m-2 \Big\},$$
		$F_u(0,0,0,0)$ is a linear isomorphism from $X_2\to X_3$.   In fact, 
		for any $g_1\in\mathcal{H}$,    $h\in \mathcal{N}_{m+1}$ 
		and $(\varphi, \psi)\in H^{m+1}_0(\Omega)\times H^m(\Omega)$,
		the equation 
		\eqref{linear-hyper in well-posed11} has a unique solution  
		$w\in E^{m+1}$ and $\partial_\nu w\in H^m(\Sigma)$. 
		Also,   by the fact that $g_1\in\mathcal{H}$,  $g_1=w_{tt}-\Delta w+\tilde f_u(\cdot,\cdot,0)w$,
		$$\tilde f_u(\cdot,\cdot,0)w \in E^m\  \mbox{ and }\ 
		\partial_t^k\left[\tilde f_u(\cdot,\cdot,0)w\right](\cdot, 0)\in H^{m-k}_0(\Omega), k=0, 1, \cdots, m-2,$$
		we have that $$w_{tt}-\Delta  w\in E^m\   \mbox{  and }\    
		\partial_t^k(w_{tt}-\Delta w)(\cdot, 0)\in H^{m-k}_0(\Omega), k=0, 1, \cdots, m-2.$$

		By  the implicit function theorem in Banach spaces, 
		there exists  a $\delta>0$ and a $C^\infty$ map $S: U_\delta \to E^{m+1}$,  such that 
		for any $(h,\varphi,\psi) \in \mathcal{N}_{m+1}\times H^{m+1}_0(\Omega)\times H^m_0(\Omega)$
		satisfying  
		$$
		\norm{h}_{H^{m+1}(\Sigma)}+\norm{\varphi}_{H^{m+1}(\Omega)}+\norm{\psi}_{H^m(\Omega)}<\delta, 
		$$
		it holds 
		that 
		\[
		F(h,\varphi,\psi, S(h,\varphi,\psi))=(0,0,0,0).
		\]
		Since  $S$ is locally Lipschitz continuous and $S(0,0,0)=0$,    $u=S(h,\varphi,\psi)$ satisfies that	\[
		\norm{u}_{E^{m+1}}+ \norm{\p_\nu u}_{H^m(\Sigma)}\leq C \LC  \norm{h}_{H^{m+1}(\Sigma)}+\norm{\varphi}_{H^{m+1}(\Omega)}+\norm{\psi}_{H^m(\Omega)} \RC.
		\]
		This, combined with the Sobolev embedding theorem,  proves the local well-posedness of \eqref{eq:wave simul} and the estimate \eqref{EEE}.   
	\end{proof}

	Note that one may extend the result in  Theorem \ref{thm:well-posedness}  to more general hyperbolic equations:
	$$u_{tt}-\nabla \cdot (\sigma \nabla u)+\tilde f(x, t,  u)=0,$$
	where   $\sigma$ is either  isotropic or anisotropic. However, in the application of  a  density result of  products of solutions of linear hyperbolic equations, we simply consider the classical wave equation to demonstrate ideas of  this approach (see Section \ref{Sec 5}).

	\subsection{Global well-posedness of weak solutions}
	This subsection is devoted to the well-posedness of weak solutions to 
	the  following semilinear wave equation: 
	\begin{eqnarray}\label{global1}
		\begin{cases}
			u_{tt}-\Delta u+f(x,t,u)=0  &\text{ in } Q,\\
			u=0   &\text{ on }\Sigma,\\
			u(x, 0)=\varphi(x), \   u_t(x, 0)=\psi(x)   &\text{ in } \Omega,
		\end{cases}
	\end{eqnarray}
	where    $f$ satisfies  
	(\ref{condition of nonlinear f at infinity data}),  and the following conditions:
	$$
	f(x, t, \cdot)\in C^1(\mathbb R),\ \mbox{a.e.}\  (x, t)\in  Q \quad \mbox{and}\quad f(\cdot, \cdot, 0)\in L^2(Q).
	$$
	Under the above assumptions on $f$,  we have the following global well-posedness result for (\ref{global1}).
	
	\begin{thm}[Global well-posedness]\label{thm global}
		For any $T>0$ and $(\varphi, \psi)\in H^1_0(\Omega)\times L^2(\Omega)$,
		the semilinear wave equation $(\ref{global1})$ admits a unique solution $u$ in the class of 
		$$u\in H_0=C([0, T]; H^1_0(\Omega))\cap C^1([0, T]; L^2(\Omega))
		\   \mbox{  and  } \    \partial_\nu u\in L^2(\Sigma).$$
	\end{thm}
	
	\begin{proof} The proof is  based on  the method of the  fixed point theorem. First,  
		assume that $n\geq 3$. Set
		
		$$
		g(x, t, s)=
		\left\{
		\begin{array}{ll}
			\displaystyle\frac{f(x, t,  s)-f(x, t, 0)}{s} \quad &\mbox{ for }s\neq 0,\\[2mm]
			\partial_s f(x, t, 0) \quad   &\mbox{ for }s=0,
		\end{array}
		\right. \quad \forall\ (x, t, s)\in Q\times\mathbb R.
		$$
		For any $z\in L^\infty(0, T; L^2(\Omega))$, consider the 
		following  linear wave equation:  
		\begin{eqnarray}\label{global2}
			\begin{cases}
				u_{tt}-\Delta u+a_z(x, t)u+f(x, t,  0)=0  &\text{ in } Q,\\
				u=0   &\text{ on }\Sigma,\\
				u(x, 0)=\varphi(x), \   u_t(x, 0)=\psi(x)   &\text{ in } \Omega,
			\end{cases}
		\end{eqnarray}
		where $a_z(x, t)=g(x,t, z(x, t))$.  Then we have  that $a_z\in L^\infty(0, T; 
		L^p(\Omega))$, for  any  $p\geq 1$.
		Indeed,  by the condition (\ref{condition of nonlinear f at infinity data}), for any $\epsilon\in(0, 1)$,
		there is a $C_\epsilon>0$, such that
		$$
		|g(x, t, s)|\leq \epsilon \ln(1+|s|)+C_\epsilon,\quad\forall\ (x, t, s)\in Q\times\mathbb R.
		$$
		Therefore, for any $z\in  L^\infty(0, T; L^2(\Omega))$,
		\begin{align}\label{global3}
			\begin{split}
				&\displaystyle\sup\limits_{t\in(0, T)} \int_\Omega e^{C|a_z(x, t)|}\, dx\\
				= &
				\sup\limits_{t\in(0, T)} \int_\Omega e^{C|g(x, t, z(x, t))|}\, dx 
				\leq  \sup\limits_{t\in(0, T)} \int_\Omega
				e^{C[\epsilon\ln(1+|z(x, t)|)+C_\epsilon]}\, dx \\
				= & C(\epsilon) \sup\limits_{t\in(0, T)}
				\int_\Omega [1+|z(x, t)|]^{C\epsilon} \, dx 
				\leq  C(\epsilon)
				\sup\limits_{t\in(0, T)} \int_\Omega [1+|z(x, t)|]^2\, dx \\
				\leq &
				C(\epsilon)\left(1+\norm{z}^2_{L^\infty(0, T; L^2(\Omega))}\right)<\infty,
			\end{split}
		\end{align}
		where $\epsilon$ is a sufficiently small positive constant such that $C\epsilon\leq 2.$
		Furthermore,  similar to arguments in \cite{35},   we  may obtain 
		\begin{equation}\label{global4}
			e^{C\norm{a_z}_{L^\infty(0, T; L^{p}(\Omega))}}
			\leq C\left(
			1+\sup\limits_{t\in (0, T)}\displaystyle\int_\Omega e^{C |a_z(x, t)|}dx\right).
		\end{equation}
		Indeed,

		\begin{align*}
			&e^{C\norm{a_z}_{L^\infty(0, T; L^{p}(\Omega))}} \\
			=&\sum\limits_{j=0}^{\infty}\frac{C^j}{j!}\norm{a_z}^{j}
			_{L^\infty(0, T; L^{p}(\Omega))}\\
			\leq &
			\sum\limits_{j=0}^{p+1} 
			\frac{C^j}{j!}\sup\limits_{t\in (0, T)}
			\Big(\int_\Omega |a_z(x, t)|^pdx\Big)^{j/p}
			+\sum\limits_{j=p+1}^{\infty} 
			\frac{C^j}{j!}\sup\limits_{t\in (0, T)}
			\Big(\int_\Omega |a_z(x, t)|^pdx\Big)^{j/p}\\
			\leq &C(p)\Big[1+\sup\limits_{t\in (0, T)}\Big(\int_\Omega 
			|a_z(x, t)|^p dx\Big)^\frac{p+1}{p}\Big]+\sum\limits_{j=p+1}^{\infty} 
			\frac{C^j}{j!}\sup\limits_{t\in (0, T)}
			\Big(\int_\Omega |a_z(x, t)|^pdx\Big)^{j/p}\\
			\leq &C(p)\Big[
			1+2\sum\limits_{j=p+1}^{\infty} 
			\frac{C^j}{j!}\sup\limits_{t\in (0, T)}
			\Big(\int_\Omega |a_z(x, t)|^pdx\Big)^{j/p}
			\Big]\\
			\leq &C(p)\Big[1+2
			\sum\limits_{j=p+1}^{\infty} 
			\frac{C^j}{j!}\sup\limits_{t\in (0, T)}
			\Big(\int_\Omega |a_z(x, t)|^{j}dx\Big)\cdot |\Omega|^{\frac{j}{p}-1}
			\Big]\\
			\leq &C(p)\Big[
			1+\sum\limits_{j=0}^{\infty} 
			\frac{C_1^j}{j!}\sup\limits_{t\in (0, T)}\int_\Omega |a_z(x, t)|^{j}dx
			\Big]
			\leq  C(p)\Big(1+\sup\limits_{t\in (0, T)}
			\int_\Omega e^{C_1 |a_z(x, t)|}dx\Big),
		\end{align*}
		where $C_1=C|\Omega|^{1/p}$, $C(p)$ denotes a  positive constant, which may be different in 
		different places, and $|\Omega|$ denotes the measure of the set $\Omega$. 
		Combining (\ref{global4}) with (\ref{global3}), one has that $a_z\in 
		L^\infty(0, T; L^p(\Omega))$.
		
		\medskip
		
		Next, we prove that the linear wave equation  (\ref{global2}) 
		admits a unique solution $u\in H_0$ and 
		$\partial_\nu u\in L^2(\Sigma)$.
		For any $w\in L^1(0, T; L^{2+\gamma_0}(\Omega))$, with $\gamma_0$
		being a positive constant, which will be determined later, 
		choose $p=2(2+\gamma_0)/\gamma_0$. 
		By $a_z\in  L^\infty(0,  T; L^p(\Omega))$, 
		it holds that  $a_z w\in L^1(0, T; L^2(\Omega))$. 
		Consider the following  wave equation: 
		\begin{eqnarray}\label{EL5}
			\begin{cases}
				u_{tt}-\Delta u=-a_z(x, t)w-f(x, t,  0)  &\text{ in } Q,\\
				u=0  &\text{ on }\Sigma,\\
				u(x, 0)=\varphi(x), \   u_t(x, 0)=\psi(x)   &\text{ in } \Omega.
			\end{cases}
		\end{eqnarray}
		By Lemma  \ref{lem} with $m=0$,  
		(\ref{EL5}) has  a unique solution $u\in  H_0$ and $\partial_\nu u\in 
		L^2(\Sigma)$. By the Sobolev embedding  theorem, 
		$$
		u\in C([0, T]; L^\frac{2n}{n-2}(\Omega)).
		$$
		When $n=1$ and $n=2$, $L^\frac{2n}{n-2}(\Omega)$ is  
		replaced  by $L^\infty(\Omega)$ and $L^p(\Omega)$ for any $p\geq 1$,  
		respectively.   Choose $\gamma_0=4/(n-2)$. Then, 
		$2+\gamma_0=2n/(n-2)$ and the following mapping is well-defined:  
		$$\mathcal{L}_2: 
		L^\infty(0, T; L^{\frac{2n}{n-2}}(\Omega))\rightarrow 
		L^\infty(0, T; L^{\frac{2n}{n-2}}(\Omega)), 
		\quad \mathcal{L}_2(w)=u,$$
		where $u\in H_0$ is the solution to (\ref{EL5}). 
		Also, by \eqref{QQ},
		\begin{align*}
			\begin{split}
				&\displaystyle\norm{u}_{C([0, T]; H^{1}(\Omega))}
				+\norm{u}_{C^{1}([0, T]; L^2(\Omega))}
				+\norm{\partial_\nu u}_{L^2(\Sigma)}\\[2mm]
				\displaystyle\leq
				&Ce^{CT}\Big(
				\norm{a_zw+f(\cdot, \cdot, 0)}_{L^1(0, T; L^2(\Omega))}+
				\norm{\varphi}_{H^{1}(\Omega)}
				+\norm{\psi}_{L^2(\Omega)}
				+\norm{h}_{H^{1}(\Sigma)}
				\Big).
			\end{split}
		\end{align*}
		This implies that
		\begin{align*}
			\norm{u}_{L^\infty(0, T;  L^{\frac{2n}{n-2}}(\Omega))}
			\leq &Ce^{CT}\Big(T
			\norm{a_z}_{L^\infty(0, T; L^p(\Omega))}\norm{w}_{L^\infty(0, T; L^{\frac{2n}{n-2}}(\Omega))}+
			\norm{f(\cdot, \cdot,  0)}_{L^1(0, T; L^2(\Omega))}\\
			&\quad\quad\quad\quad
			+\norm{\varphi}_{H^{1}(\Omega)}
			+\norm{\psi}_{L^2(\Omega)}
			+\norm{h}_{H^{1}(\Sigma)}
			\Big).
		\end{align*}
		Similar to arguments in Lemma \ref{lem*},  by the Banach fixed point theorem, 
		(\ref{global2})  admits a unique solution $u\in H_0$. 
		
		\medskip
		
		Finally,   define a map 
		\begin{align*}
			\mathcal{L}_3:  L^\infty(0, T; L^2(\Omega))\rightarrow L^\infty(0, T; L^2(\Omega)),
			\quad  
			\mathcal{L}_3(z)=u, 
		\end{align*}
		for any $z\in  L^\infty(0, T; L^2(\Omega))$, where $u\in H_0$ is the solution to (\ref{global2}) associated to $a_z(x, t)=g(x, t, z(x, t))$.
		In the following, we will prove that the map  $\mathcal{L}_3$ has a unique fixed  point  in a 
		set $V\subseteq L^\infty(0, T; L^2(\Omega))$.  To this  aim,  for any $t\in[0, T]$,  set
		$$
		E(t)=\displaystyle\frac{1}{2}\int_\Omega \left[u_t^2(x, t)+
		|\nabla u|^2+u^2(x, t)\right]dx.
		$$
		Multiplying both  sides  of  the first equation  in (\ref{global2})  by $u_t$  and 
		integrating   in  $\Omega$,  one has
		\begin{align}\label{PP}
			\begin{split}
				E_t(t)=&\displaystyle-\int_\Omega
				[a_zuu_t+f(x, t, 0)u_t]dx+\int_\Omega uu_t  \, dx\\
				\leq &\displaystyle  \norm{a_z(\cdot, t)}_{L^n(\Omega)}\norm{u(\cdot, t)}_{L^{\frac{2n}{n-2}}(\Omega)}
				\norm{u_t(\cdot,t)}_{L^2(\Omega)}\\[4mm]
				&\displaystyle\quad+\norm{f(\cdot,  t, 0)}_{L^2(\Omega)}\norm{u_t(\cdot, t)}_{L^2(\Omega)}+\norm{u(\cdot, t)}_{L^2(\Omega)}\norm{u_t(\cdot, t)}_{L^2(\Omega)}.
			\end{split}
		\end{align}
		This implies that
		$$
		E_t(t)\leq C\norm{a_z(\cdot, t)}_{L^n(\Omega)}E(t)+\norm{f(\cdot,  t, 0)}_{L^2(\Omega)}
		E^{\frac{1}{2}}(t)+E(t).
		$$
		Hence, 
		\begin{align*}
			\norm{u}^2_{H_0}\leq & CE(t) \\
			\leq & C\Big[E(0)+ \norm{f(\cdot,  \cdot,  0)}^2_{L^2(Q)}\Big]
			e^{C ( 1+\norm{a_z}_{L^1(0, T; L^n(\Omega))})}\\[2mm]
			\leq & C\Big[\norm{\varphi}^2_{H^1(\Omega)}+\norm{\psi}^2_{L^2(\Omega)}
			+\norm{f(\cdot,  \cdot,  0)}^2_{L^2(Q)}\Big]
			e^{C(1+T\norm{a_z}_{L^\infty(0, T; L^n(\Omega))})}.
		\end{align*}
		By (\ref{global3}) and  (\ref{global4}), 
		it follows that for a sufficiently small $\epsilon$ with $C\epsilon\leq 1$,
		\begin{align*}
			\norm{u}^2_{H_0} 
			&\leq C\Big[\norm{\varphi}^2_{H^1(\Omega)}+\norm{\psi}^2
			_{L^2(\Omega)}
			+\norm{f(\cdot,  \cdot,  0)}^2_{L^2(Q)}\Big]
			\Big(1+\sup\limits_{t\in (0, T)}\int_\Omega e^{C |a_z(x, t)|}dx\Big) \\
			&\leq  C\Big[\norm{\varphi}^2_{H^1(\Omega)}+\norm{\psi}^2_{L^2(\Omega)}
			+   \norm{f(\cdot,  \cdot,  0)}^2_{L^2(Q)}\Big]
			\Big(1+\sup\limits_{t\in (0, T)}\int_\Omega |z(x, t)|^{C \epsilon} dx\Big) \\
			&\leq  C\Big[\norm{\varphi}^2_{H^1(\Omega)}+\norm{\psi}^2_{L^2(\Omega)}+
			\norm{f(\cdot,  \cdot,  0)}^2_{L^2(Q)}\Big]
			\Big(1+\sup\limits_{t\in (0, T)}\int_\Omega |z(x, t)| dx\Big) \\
			&\leq  C\Big[\norm{\varphi}^2_{H^1(\Omega)}+\norm{\psi}^2_{L^2(\Omega)}+ 
			\norm{f(\cdot,  \cdot,  0)}^2_{L^2(Q)}\Big]
			\Big(1+\sup\limits_{t\in (0, T)}\norm{z(\cdot, t)}_{L^2(\Omega)}\Big).
		\end{align*}
		The above estimate implies that
		\begin{align*}
			\norm{u}^2_{L^\infty(0, T;  L^2(\Omega))}
			\leq C\left[\norm{\varphi}^2_{H^1(\Omega)}+\norm{\psi}^2_{L^2(\Omega)}
			+  \norm{f(\cdot,  \cdot,  0)}^2_{L^2(Q)}\right]
			\LC 1+\norm{z}_{L^\infty(0, T;  L^2(\Omega))}\RC .
		\end{align*}
		Therefore,  there exists a positive constant $C_*$, depending  on  
		$\norm{f(\cdot,  \cdot,  0)}_{L^2(Q)}$, $\norm{\varphi}_{H^1(\Omega)}$ and
		$\norm{\psi}_{L^2(\Omega)}$, such that
		$$
		\mathcal{L}_3(V)\subseteq  V,\quad\mbox{with }\  V=\Big\{ u\in L^\infty(0, T;  L^2(\Omega))\  \Big|\   \norm{u}_{L^\infty(0, T;  L^2(\Omega))}\leq C_*  \Big\}.
		$$
		Also, $\mathcal{L}_3$  is  compact. By the Schauder  fixed point technique,  
		$\mathcal{L}_3$ has a  
		fixed point $u$ in  $V$, which is  the solution to the semilinear
		wave equation (\ref{global1}).  Since $u$ is a fixed point of $\mathcal{L}_3$,
		it is a solution to 
		(\ref{global2}) associated to some $z\in L^\infty(0, T; L^2(\Omega))$. 
		Hence, $u\in H_0$ and $\partial_\nu u\in  L^2(\Sigma)$. 
		
		Moreover, suppose that $u_1,   u_2\in C([0, T]; H^1(\Omega))\cap C^1([0, T];  L^2(\Omega))$ are two solutions to \eqref{global1}.  Set $u=u_1-u_2$. Then $u$ satisfies the 
		following  wave equation:
		\begin{eqnarray}\label{1}
			\left\{
			\begin{array}{ll}
				u_{tt}-\nabla\cdot(\sigma\nabla u)+a(x, t)u=0  &\mbox{ in  }Q,\\[2mm]
				u=0 &\mbox{ on }\Sigma,\\[2mm]
				u(x, 0)=u_t(x, 0)=0 &\mbox{ in }\Omega,
			\end{array}
			\right.
		\end{eqnarray}
		where $a(x, t)=\displaystyle\int^1_0 f_u(x, t, su_1(x, t)+(1-s)u_2(x, t))\, ds$.
		By the proof of Theorem 2.1 (see \eqref{AAAA}),  the coefficient $a\in L^\infty(0, T; L^n(\Omega))$. 
		Hence,  by the classical uniqueness result of linear wave equations,  we have 
		directly that $u=0$, that is, $u_1=u_2$ in $Q$.  
	\end{proof}

	\section{Unique determination of  initial data }\label{Sec 4}
	
	In this section, we study the inverse problem  on   determining  initial data for a class of 
	semilinear wave  equations   by the passive   measurement.  As preliminaries,  
	we  first recall an observability result for    the following  wave equation:
	\begin{align}\label{wave2}
		\begin{cases}
			u_{tt}-\Delta u+a(x, t)u=K(x, t)   &\text{ in } Q,\\
			u=0  &\text{ on } \Sigma,\\
			u(x, 0)=\varphi(x), \quad     u_t(x,  0)=\psi(x)   &\text{ in } \Omega,
		\end{cases}
	\end{align}
	where  $a\in  L^\infty(0,  T; L^{p}(\Omega))$ with $p\geq n$, $K\in L^2(Q)$ and  $(\varphi, \psi)\in H^1_0(\Omega)\times  L^2(\Omega)$.
	
	\medskip
	
	Similar to  \cite{8} and \cite{41},  one has  the following  result.
	\begin{lem}\label{lem:Controllability of linear wave equation}
		For any $T>T^*$ $($in  $(\ref{T conditions}))$,  any solution $u\in C([0, T];  H^1_0(
		\Omega))\cap C^1([0, T]; L^2(\Omega))$ to $(\ref{wave2})$  satisfies 
		\begin{align}\label{wave3}
			\begin{split}
				&\|(\varphi, \psi)\|_{H^1_0(\Omega)\times L^2(\Omega)}\\
				\leq & C(T, \Omega, n, \Gamma_0) 
				e^{C\|a\|^{\frac{1}{3/2-n/p}}_{L^\infty(0,  T; L^{p}(\Omega))}}  \Big(
				\norm{\partial_\nu  u}_{L^2(0,  T;  L^2(\Gamma_0))}+\big\|K\big\|_{L^2(Q)}\Big),
			\end{split}
		\end{align}
		where $\Gamma_0$  is given in $(\ref{BBCC})$.
	\end{lem}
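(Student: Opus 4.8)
The goal is to prove the observability estimate \eqref{wave3} for the linear hyperbolic equation \eqref{wave2} with a potential $a \in L^\infty(0,T;L^p(\Omega))$, $p\geq n$. The plan is to derive it from a Carleman estimate for the wave operator $\partial_t^2 - \nabla\cdot(\sigma\nabla\cdot)$ associated with the weight built from the function $d(\cdot)$ appearing in hypothesis (\textbf{H}). The condition \eqref{condition on sigma} is precisely the pseudoconvexity-type condition that makes such a global Carleman estimate available.

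\medskip

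\noindent\textbf{Step 1: Carleman estimate for the principal part.} First I would fix a large parameter $\lambda>0$ and a function $\varphi(x,t) = d(x) - \beta(t - T/2)^2$ (or $\theta(x,t)=e^{\lambda\varphi}$ with the two-parameter convention), choosing $\beta \in (0,1)$ and $T^* = T^*(\sigma,\Omega,n)$ large enough so that $\varphi(\cdot,0), \varphi(\cdot,T) \le \varphi(\cdot, T/2) - $ (a positive constant) on $\overline\Omega$, while $\varphi > 0$ on $[0,T]$. Using \eqref{condition on sigma} and the absence of critical points of $d$, one obtains the standard pointwise Carleman identity and then, after integration over $Q$ and absorbing lower-order terms, an estimate of the form
\begin{align*}
\lambda \int_Q e^{2\lambda\varphi}\Big(|u_t|^2 + |\nabla u|^2 + \lambda^2 |u|^2\Big)\,dx\,dt
\le C\int_Q e^{2\lambda\varphi}\big|u_{tt}-\nabla\cdot(\sigma\nabla u)\big|^2\,dx\,dt
+ C\lambda\int_{\Sigma} e^{2\lambda\varphi}\Big|\sum_{i,j}\sigma^{ij}\nu_i u_{x_j}\Big|^2\,dx\,dt,
\end{align*}
valid for all $u$ with the stated regularity and $u=0$ on $\Sigma$; only the part of $\Sigma$ where $\sum \sigma^{ij}d_{x_i}\nu_j>0$, i.e. $\Gamma_0$, contributes on the right since elsewhere the boundary term has a favorable sign. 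This is the technical core and the main obstacle: reproducing (or citing precisely from \cite{dzz}, \cite{lu2013observability}) the pointwise weighted identity and verifying the time-boundary choices.

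\medskip

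\noindent\textbf{Step 2: Absorbing the potential.} I would apply the Carleman estimate to the solution $u$ of \eqref{wave2}, so that $u_{tt}-\nabla\cdot(\sigma\nabla u) = K - a u$. The term $\int_Q e^{2\lambda\varphi}|K|^2$ goes to the right-hand side directly. For $\int_Q e^{2\lambda\varphi}|a u|^2$ I would use H\"older in $x$ with exponents $p/2$ and its conjugate together with the Sobolev embedding $H^1_0(\Omega)\hookrightarrow L^{2p/(p-2)}(\Omega)$ (for $p\ge n$, using $n\ge 2$; the borderline/$n=2$ case handled by taking a slightly larger exponent as in the proof of Theorem \ref{thm global}), bounding it by $\|a\|_{L^\infty(0,T;L^p)}^2 \int_Q e^{2\lambda\varphi}(|\nabla u|^2 + |u|^2)$. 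Choosing $\lambda \sim \|a\|_{L^\infty(0,T;L^p)}^{1/(3/2-n/p)}$ — the exponent is dictated by balancing the $\lambda$-gain against the Sobolev loss — lets the left-hand side of the Carleman estimate absorb this term, leaving
\begin{align*}
\lambda \int_Q e^{2\lambda\varphi}\big(|u_t|^2+|\nabla u|^2\big)\,dx\,dt
\le C e^{C\lambda}\left(\Big\|\sum_{i,j}\sigma^{ij}\nu_i u_{x_j}\Big\|_{L^2(0,T;L^2(\Gamma_0))}^2 + \|K\|_{L^2(Q)}^2\right).
\end{align*}

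\medskip

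\noindent\textbf{Step 3: From the interior weighted estimate to the energy at $t=0$.} Using the time-choice in Step 1, on a middle time interval $[T/2-\epsilon_0, T/2+\epsilon_0]$ the weight satisfies $e^{2\lambda\varphi}\ge e^{2\lambda \mu_1}$ while at $t=0,T$ it is $\le e^{2\lambda\mu_0}$ with $\mu_0<\mu_1$; combined with the energy (in)equality for \eqref{wave2} (multiply by $u_t$, Gronwall, using $a u \in L^2$ via the same H\"older/Sobolev bound and $K\in L^2(Q)$), the standard argument propagates the interior bound on the middle strip to a bound on $\|(\varphi,\psi)\|_{H^1_0(\Omega)\times L^2(\Omega)}$, at the cost of another factor $e^{C\lambda}$. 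Substituting $\lambda = C\|a\|_{L^\infty(0,T;L^p)}^{1/(3/2-n/p)}$ gives exactly the factor $e^{C\|a\|^{1/(3/2-n/p)}_{L^\infty(0,T;L^p)}}$ in \eqref{wave3}, and taking square roots yields the claimed estimate. I would remark that this is essentially \cite[Theorem]{dzz} adapted to the present $\sigma$ and observation region $\Gamma_0$, so the write-up can be kept short by quoting the Carleman estimate and detailing only the potential-absorption bookkeeping.
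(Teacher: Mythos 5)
The paper offers no proof of this lemma at all---it only remarks that the result follows ``similar to'' \cite{dzz} and \cite{lu2013observability}---and your sketch reconstructs precisely the Carleman-estimate argument used in those references: the pseudoconvex weight built from $d$ via condition \eqref{condition on sigma}, absorption of the potential term through H\"older/Sobolev with the parameter choice $\lambda\sim\|a\|_{L^\infty(0,T;L^p(\Omega))}^{1/(3/2-n/p)}$ (which is exactly where the exponent in \eqref{wave3} originates), and propagation of the weighted interior bound on a middle time slab back to the initial energy. Your approach is therefore essentially the same as the one the paper implicitly relies on, the only caveat being that, like the paper, you quote rather than reprove the pointwise Carleman identity that constitutes the technical core.
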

	
	\medskip

	Next,  we  give a proof of Theorem \ref{Main Thm 1}.
	
	\begin{proof}[Proof of Theorem $\ref{Main Thm 1}$]
		For any $f\in\mathcal{M}_T$ and    initial values   
		$(\varphi_1, \psi_1),  (\varphi_2,\psi_2)\in H^1_0(\Omega)\times L^2(\Omega)$,  
		denote by $u_1$ and $u_2$,  respectively, the corresponding solutions to (\ref{IBVP for thm 1 for j=1,2}) in $H_0$.  
		Set $\wt u=u_1-u_2.$  Then $\wt u\in H_0$ satisfies 
		\begin{align*}
			\begin{cases}
				\wt u_{tt}-\Delta \wt u+f(x,t,u_1)-f(x, t, u_2)=0  &\text{ in }Q,\\
				\wt u=0  &\text{ on }\Sigma,\\
				\wt u(x, t)=\varphi_1(x)-\varphi_2(x), \quad      \wt u_{t}(x, 0)=\psi_1(x)-\psi_2(x)  &\text{ in } \Omega.
			\end{cases}
		\end{align*}
		By the mean value theorem, 
		\begin{align*}
			&f(x, t, u_1(x, t))-f(x, t, u_2(x, t))\\
			=&\displaystyle\int^1_0 f_u(x, t,  su_1(x, t)+(1-s)u_2(x,  t))ds\cdot \Big[u_1(x, t)-u_2(x, t)\Big].
		\end{align*}
		It follows that
		$$
		\wt u_{tt}-\Delta \wt u+a(x, t)\wt u(x, t)=0,
		$$
		where
		\begin{equation}\label{AAAA}
			a(x,  t)=\displaystyle\int^1_0 f_u(x, t,  su_1(x, t)+(1-s)u_2(x,  t))\, 
			ds\in L^\infty(0, T; L^n(\Omega)).
		\end{equation}
		Indeed,  by (\ref{condition of nonlinear f at infinity data}), for $u_1, u_2\in H_0$ and  any $\epsilon\in (0, 1)$, 
		there exists a positive constant $C_\epsilon$,  such that 
		\begin{align*}
			|a(x, t)|\leq \epsilon\ln\Big(|u_1(x, t)|+|u_2(x, t)|\Big)+C_\epsilon.
		\end{align*}
		By (\ref{global3})  and (\ref{global4})  for $p=n$,  
		\begin{align*}
			e^{\|a\|_{L^\infty(0, T; L^n(\Omega))}}\leq &
			C\Big(1+\sup\limits_{t\in (0, T)}\int_\Omega e^{C|a(x, t)|}\, dx\Big)\\
			\leq & C+C(\epsilon)\sup\limits_{t\in (0, T)}\int_\Omega 
			e^{C\left[\epsilon\ln \LC |u_1(x, t)|+|u_2(x, t)|+1\RC\right]}\, dx\\
			\leq & C(\epsilon)\Big[1+\sup\limits_{t\in (0, T)}\int_\Omega  
			\Big(1+|u_1(x, t)|+|u_2(x, t)|\Big)^{C\epsilon} dx\Big]\\
			\leq & C(\epsilon)\Big[1+\sup\limits_{t\in (0, T)}\int_\Omega  \Big(1+|u_1(x, t)|+|u_2(x, t)|\Big)^2 dx\Big] \\
			\leq &C(\epsilon)\Big(1+\|u_1\|_{C([0, T]; L^2(\Omega))}^2+\|u_2\|_{C([0, T]; L^2(\Omega))}^2\Big),
		\end{align*}
		where $\epsilon>0$ is a sufficiently small constant, such that $C\epsilon\leq 2$, 
		and $C(\epsilon)$ denotes a positive constant, which  depends on $\epsilon$ and 
		may be different from line to line.

		\medskip
		
		By  Lemma \ref{lem:Controllability of linear wave equation},   for  any $p=n$, 
		\begin{align*}
			\left\|(\varphi_1-\varphi_2, \psi_1-\psi_2)\right\|_{H^1_0(\Omega)\times L^2(\Omega)} 
			\leq  C e^{C\|a\|^{2}_{L^\infty(0,  T; L^{n}(\Omega))}} 
			\norm{\partial_\nu \wt u}_{L^2(0,  T;  L^2(\Gamma_0))},
		\end{align*}
		for some  positive constant $C$ depending only on $T$, $\Omega$, $n$ and $\Gamma_0$.
		This implies that the following quantitative stability result:
		\begin{align*}
			&\left\|(\varphi_1-\varphi_2, \psi_1-\psi_2)\right\|_{H^1_0(\Omega)\times L^2(\Omega)}  \\[1mm]
			\leq & C(f, u_1, u_2, n, T, \Omega, \Gamma_0, \Sigma)\cdot\left\|\Lambda_{\varphi_1,\psi_1, f}^0-\Lambda_{\varphi_2, \psi_2, f}^0\right\|_{L^2(0,  T;  L^2(\Gamma_0))},
		\end{align*}
		where $C(f, u_1, u_2,  \Omega, n, T, \Gamma_0)$ is a positive constant depending on $n, 
		T, \Omega, \Gamma_0$, $u_1, u_2$ and $f$, but independent of $(\varphi_j,\psi_j)$, for $j=1,2$.
	\end{proof}  
	\medskip

	On the other hand,  there is a counterexample showing that if  $f$ is unknown,  the passive measurement cannot uniquely determine all unknowns. 
	\begin{thm}[Non-uniqueness]\label{thm:2} Suppose that 
		$f_j\in\mathcal M_T$  and
		$(\varphi_j,\psi_j)\in H^1_0(\Omega)\times L^2(\Omega)$ for $j=1, 2$.  Denote by
		$u_j$  the  solution  to  the following semilinear 
		wave equation:  
		\begin{align}\label{llll}
			\begin{cases}
				u_{j,  tt}-\Delta u_j+f_j(x,t,u_j)=0  &\text{ in } Q,\\
				u_j=0   &\text{ on } \Sigma,\\
				u_j(x,0)=\varphi_j(x), \quad     u_{j, t}(x,0)=\psi_j(x)   &\text{ in } \Omega.
			\end{cases}
		\end{align}
		Then there exist two groups of unknown sources $(\varphi_1, \psi_1, f_1),  (\varphi_2,\psi_2,f_2)
		\in H^1_0(\Omega)\times L^2(\Omega)\times\mathcal M_T$,  such that 
		\[
		(\varphi_1, \psi_1, f_1) \neq (\varphi_2,\psi_2,f_2),
		\]
		but
		\[
		\Lambda^0_{\varphi_1, \psi_1, f_1}=\Lambda^0_{\varphi_2,\psi_2,f_2}.
		\]
	\end{thm}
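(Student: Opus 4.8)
The plan is to produce the counterexample by hand. The key observation is that a passive boundary observation $\Lambda^0_{\varphi,\psi,f}=\Lambda_{\varphi,\psi,f}(0)$ depends on $f$ only through the restriction of $f(x,t,\cdot)$ to the range of the wave field it generates with zero boundary input: if two nonlinearities $f$ and $\tilde f$ satisfy $\tilde f(x,t,u(x,t))=f(x,t,u(x,t))$ throughout $Q$ for the solution $u$ of \eqref{llll}, then $u$ solves the equation for both, so the conormal traces coincide. Since Theorem \ref{Main Thm 1} already shows that for a \emph{fixed} $f\in\mathcal M_T$ the passive map determines $(\varphi,\psi)$, any non-uniqueness must exploit the freedom in $f$; the cleanest instance is obtained by forcing the wave field to be trivial, so that modifying $f$ away from $z=0$ is invisible.

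Concretely, first I would set $\varphi_1=\psi_1=\varphi_2=\psi_2\equiv 0$, take $f_1\equiv 0$, and take $f_2(x,t,z)=\sin z$ (any $f_2\in\mathcal M_T$ with $f_2(\cdot,\cdot,0)\equiv 0$ and $f_2\not\equiv 0$ serves equally well). The verification then proceeds in three short steps. Step one: $f_1,f_2\in\mathcal M_T=L^\infty(Q;W^{1,\infty}(\mathbb R))$, which is immediate. Step two: invoke the global well-posedness of Theorem \ref{thm global}; its hypotheses hold here automatically, because $\partial_z f_j$ is bounded so that the superlinear growth condition \eqref{condition of nonlinear f at infinity data} is trivially satisfied, and $f_j(\cdot,\cdot,0)\equiv 0\in L^2(Q)$, so \eqref{llll} has a unique solution $u_j\in H_0=C([0,T];H^1_0(\Omega))\cap C^1([0,T];L^2(\Omega))$. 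Step three: since $f_j(x,t,0)=0$ in $Q$, the constant function $u_j\equiv 0$ solves \eqref{llll}, hence by uniqueness it is the solution. Consequently
\[
\Lambda^0_{\varphi_j,\psi_j,f_j}=\sum_{i,l=1}^{n}\sigma^{il}\nu_i\,\partial_{x_l}u_j\,\big|_{\Gamma_0\times(0,T)}=0,\qquad j=1,2,
\]
so that $\Lambda^0_{\varphi_1,\psi_1,f_1}=\Lambda^0_{\varphi_2,\psi_2,f_2}$, whereas $(\varphi_1,\psi_1,f_1)=(0,0,0)\neq(0,0,\sin z)=(\varphi_2,\psi_2,f_2)$, which is the assertion.

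There is no genuinely hard step in this argument; the only point requiring care is checking that the chosen $f_2$ really lies in the class $\mathcal M_T$ and triggers the well-posedness theory, so that $\Lambda^0_{\varphi_2,\psi_2,f_2}$ is defined and $u_2\equiv 0$ is the (unique) solution. If one wishes a less degenerate counterexample, one can keep the initial data zero and allow $f_1$ itself to be nonzero (any $f_1,f_2\in\mathcal M_T$ agreeing at $z=0$ with $f_j(\cdot,\cdot,0)\equiv 0$), or — at the price of an extra measurability check for the resulting coefficient — let $f_2$ coincide with $f_1$ only along the space-time graph $\{(x,t,u_1(x,t)):(x,t)\in Q\}$ of a nontrivial wave field $u_1$ and differ from it elsewhere. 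In every variant the moral is the same: passive data are blind to the values of $f$ off the orbit of the solution, so active probing is genuinely necessary to recover the nonlinearity.
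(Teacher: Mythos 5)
Your proposal is correct as a proof of the theorem as literally stated, but it is a genuinely different counterexample from the one in the paper. You fix the initial data to be zero, force the wave field to be trivial, and vary the nonlinearity off the value $z=0$ (e.g.\ $f_1\equiv 0$ versus $f_2=\sin z$); all the verifications you list (membership in $\mathcal M_T$, global well-posedness from Theorem \ref{thm global} or the remark following \eqref{set M}, uniqueness forcing $u_j\equiv 0$, hence vanishing conormal traces) go through. The paper instead picks two smooth functions $u_1,u_2$ vanishing in a neighborhood $\Omega_\epsilon\times[0,T]$ of the lateral boundary but with $u_1(\cdot,0)\neq u_2(\cdot,0)$ on a set of positive measure, and defines $f_j(x,t,z):=F_j(x,t)=-u_{j,tt}+\nabla\cdot(\sigma\nabla u_j)$ as $z$-independent source terms; since the $u_j$ vanish near $\Gamma$, both passive measurements are zero. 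The difference matters for what the example demonstrates: the paper's construction shows that when $f$ is unknown, the passive measurement fails to determine even the \emph{initial data} $(\varphi,\psi)$ --- the sharper counterpoint to Theorem \ref{Main Thm 1} --- whereas your construction has $(\varphi_1,\psi_1)=(\varphi_2,\psi_2)$ and only exhibits non-uniqueness of $f$ off the orbit of the (trivial) solution. Since the theorem only asserts the existence of two distinct triples with equal passive data, your argument suffices; your closing remark about letting $f_2$ agree with $f_1$ only along the graph of a nontrivial wave field is essentially the paper's idea and, if you carried it out with distinct initial traces, would recover the stronger content of the paper's example.
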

	
	\begin{proof} Assume that two   functions $u_1, u_2\in C^\infty(\overline{Q})$ satisfy   that  
		\begin{eqnarray*}
			&&u_1(\cdot, 0)\neq u_2(\cdot, 0) \mbox{ in a measurable subset of }\Omega\mbox{  with positive measure}, \\[2mm]
			&&\mbox{and}\  u_1(x, t)=u_2(x, t)=0 \mbox{  in  } \Omega_\epsilon\times[0, T],
		\end{eqnarray*}
		where $\Omega_\epsilon=\Big\{ x\in \Omega\  \Big|\      \dist(x,  \Gamma)<\epsilon  \Big\}$.
		Set 
		$$
		F_j(x, t)=-u_{j, tt}(x,t)+\Delta u_j(x, t), \quad \text{ for }j=1,2 \mbox{ and }(x, t)\in Q.
		$$
		Then  $u_j$   $(j=1, 2)$  are solutions to (\ref{lll}) associated  to 
		\begin{align*}
			&\varphi_j(x)=u_j(x,  0),\  \psi_j(x)=u_{j, t}(x, 0)\mbox{  and  }  f_j(x, t, u_j)=F_j(x,  t).
		\end{align*}
		Notice that
		\[
		(\varphi_1, \psi_1, f_1)\neq (\varphi_2,\psi_2,f_2), 
		\]   
		but $$\partial_\nu u_1\Big|_{\Gamma_0\times(0, T)}
		=\Lambda^0_{\varphi_1, \psi_1, f_1}=\Lambda^0_{\varphi_2,\psi_2,f_2}
		=\partial_\nu u_2\Big|_{\Gamma_0\times(0, T)}=0.$$
		
	\end{proof}

	\medskip

	Finally,  we give  a proof of 
	Corollary \ref{Main Thm 2}.
	
	\begin{proof}[Proof of Corollary $\ref{Main Thm 2}$]
		For  any $f_1, f_2\in \mathcal{C}_T$,  there exists an $f_0\in \mathcal{M}_T$, such that
		$$
		f_1(x, t, s)= f_2(x, t, s)= f_0(x, t, s)\quad\mbox{ in }\Omega\times[0, T^*+\epsilon]\times\mathbb R.
		$$
		Also, by the condition that 
		$\Lambda^0_{\varphi_1, \psi_1, f_1}=\Lambda^0_{\varphi_2, \psi_2, f_2}$, 
		we have that
		$$
		\Lambda^0_{\varphi_1, \psi_1, f_0}=\Lambda^0_{\varphi_2, \psi_2, f_0}  \quad
		\mbox{ on }\Gamma_0\times[0, T^*+\epsilon].
		$$
		Then, by the  results in Theorem \ref{Main Thm 1} for $f=f_0$  and $T=T^*+\epsilon$, 
		the conclusion in Corollary \ref{Main Thm 2} is true.

	\end{proof}

	\section{Simultaneous recovery of initial data and coefficients}\label{Sec 5}
	
	In this section,  the higher order linearization technique  will be used  to determine unknown initial  value and   nonlinear  function in  the  semilinear wave equation (\ref{eq:wave simul})
	simultaneously.  As preliminaries,   based on  the  observability result in Lemma  \ref{lem:Controllability of linear wave equation},  an approximation property for wave 
	equations is given.
	
	\medskip
	
	First, for any $T>T^*$ (see (\ref{T conditions})),   choose two constants $t_1$ and  $t_2$,
	such that 
	$$	T^*<t_1<t_2<T.$$
	Then one has the following approximation result.

	\begin{thm}\label{thm:Runge}
		Assume that      
		$q\in E^{m+1}$ with $\supp\ q\subseteq \overline{\Omega} \times [t_1,t_2]$ for an integer  $m>(n+1)/2$. Then for any solution $v\in C([t_1,t_2];  L^2(\Omega))\cap  C^1([t_1,  t_2];  H^{-1}(\Omega))$ to	\begin{align*}
			v_{tt}-\Delta v +q v =0 \quad\text{ in }Q,
		\end{align*}
		and any $\eps>0$, there exists a solution $V\in C^2(\overline{Q})$ to  
		\begin{align}\label{bigger domain solution}
			\begin{cases}
				V_{tt} - \Delta V +q V=0  &\text{ in }Q, \\
				V(x, 0)=V_t(x, 0)=0 & \text{ in }\Omega,
			\end{cases}
		\end{align}
		such that 
		\[
		\norm{V-v}_{L^2(\Omega \times (t_1,t_2))}<\eps.
		\]
	\end{thm}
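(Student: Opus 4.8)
The plan is to deduce the Runge approximation from a duality (Hahn--Banach) argument built on top of the controllability/observability machinery already assembled in Section~\ref{Sec 4}. Let $\mathcal{S}\subseteq L^2(\Omega\times(t_1,t_2))$ denote the closure of the set of restrictions $V|_{\Omega\times(t_1,t_2)}$, where $V$ ranges over the $C^2(\overline{Q})$ solutions of \eqref{bigger domain solution}. It suffices to show that if $w\in L^2(\Omega\times(t_1,t_2))$ is orthogonal to $\mathcal{S}$, then $w$ is orthogonal to every solution $v$ of $v_{tt}-\Delta v+qv=0$ belonging to the stated energy class. To exploit orthogonality, I would introduce the adjoint (backward) wave problem: let $\phi$ solve
\begin{align}\label{adjoint-runge}
	\begin{cases}
		\phi_{tt}-\Delta\phi+q\phi=w\,\chi_{(t_1,t_2)} & \text{ in }Q,\\
		\phi(\cdot,T)=\phi_t(\cdot,T)=0 & \text{ in }\Omega,\\
		\phi=0 & \text{ on }\Sigma.
	\end{cases}
\end{align}
Since $q\in E^{m+1}$ with $m>n/2$ is in particular bounded, this linear problem is well posed and $\phi\in C([0,T];H^1_0(\Omega))\cap C^1([0,T];L^2(\Omega))$ with $\partial_\nu\phi\in L^2(\Sigma)$ (the hidden regularity used throughout Section~\ref{Sec 4}).

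The key computation is to pair the equation for $V$ against $\phi$ and integrate by parts over $Q$. Because $V(\cdot,0)=V_t(\cdot,0)=0$ and $\phi(\cdot,T)=\phi_t(\cdot,T)=0$, the interior time-boundary terms drop out, the bulk term $\int_Q V(\phi_{tt}-\Delta\phi+q\phi)=\int_{\Omega\times(t_1,t_2)}Vw=0$ by the orthogonality assumption, and the only surviving term is the lateral boundary integral $\int_\Sigma \partial_\nu\phi\, V\,dS\,dt$ (here $V$ need not vanish on $\Sigma$, while $\phi$ does). Since $V|_\Sigma$ is otherwise unconstrained — one can prescribe essentially arbitrary smooth boundary data and still solve \eqref{bigger domain solution}, using Lemma~\ref{lem} for well-posedness — this forces
\[
\partial_\nu\phi=0\quad\text{ on }\Sigma,
\]
or at least on the portion of $\Sigma$ reachable by such $V$. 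Combined with $\phi|_\Sigma=0$, we get that $\phi$ has vanishing Cauchy data on $\Sigma$. Now apply the observability estimate of Lemma~\ref{lem:Controllability of linear wave equation} (in the form noted in Remark~\ref{MMM}, after a reflection in time $t\mapsto T-t$ to convert the backward problem into a forward one with the source $w\chi_{(t_1,t_2)}$): since $T>T_*$ and $\Gamma_0$ has full measure once $V$ carries arbitrary boundary data, vanishing Cauchy data on the observation region and the controllability window $(t_1,t_2)$ containing $\supp q$ and $\supp w$ yield that $\phi\equiv 0$ on $\Omega\times(0,t_1)$ and, running the argument the other way, on $\Omega\times(t_2,T)$, hence $\phi(\cdot,t_1)=\phi_t(\cdot,t_1)=0$ and $\phi(\cdot,t_2)=\phi_t(\cdot,t_2)=0$. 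Finally, for any admissible solution $v$ on $[t_1,t_2]$, pair the equation $v_{tt}-\Delta v+qv=0$ against $\phi$ over $\Omega\times(t_1,t_2)$: integration by parts in $t$ kills all boundary-in-time terms (Cauchy data of $\phi$ vanish at $t_1$ and $t_2$), integration by parts in $x$ kills the lateral terms (here one uses that on $\Omega\times(t_1,t_2)$ the boundary values are controlled, or a density argument reducing to smooth $v$), and one is left with $\int_{\Omega\times(t_1,t_2)} v\,w = 0$. Thus $w\perp v$, which is what we wanted, and the Hahn--Banach argument closes.

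\textbf{Main obstacle.} The delicate point is the regularity bookkeeping needed to make the two integration-by-parts identities legitimate for the rough class of $v$ (only $C([t_1,t_2];L^2)\cap C^1([t_1,t_2];H^{-1})$) and to justify that prescribing arbitrary boundary data for $V$ in \eqref{bigger domain solution} (i) is possible with $V\in C^2(\overline{Q})$ — here the hypothesis $m>n/2$ and Lemma~\ref{lem} together with Sobolev embedding are exactly what is needed — and (ii) genuinely forces $\partial_\nu\phi=0$ on enough of $\Sigma$ to invoke observability with the full-measure $\Gamma_0$. One must also be careful that the support condition $\supp q\subseteq\overline\Omega\times[t_1,t_2]$ is used so that $\phi$ solves a homogeneous free wave equation on $\Omega\times(0,t_1)$ and $\Omega\times(t_2,T)$, which is what lets finite speed of propagation / the unique continuation inherent in the observability estimate propagate the vanishing of Cauchy data across those time slabs. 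Modulo these technical verifications, the argument is a fairly standard duality-plus-controllability scheme; I expect the write-up to hinge on carefully choosing the function spaces so that every pairing below is well-defined.
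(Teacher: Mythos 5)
Your proposal is correct and follows essentially the same route as the paper: the Hahn--Banach duality reduction, the backward adjoint problem with source $w\chi_{(t_1,t_2)}$ and zero final/lateral data, the integration by parts forcing $\partial_\nu\phi=0$ on $\Sigma$ from the arbitrariness of $V|_\Sigma$, the observability/uniqueness step (Remark~\ref{MMM}, using $t_1>T_*$) to kill $\phi$ on $\Omega\times((0,t_1)\cup(t_2,T))$, and the final pairing over $\Omega\times(t_1,t_2)$. The technical caveats you flag (regularity of $v$, realizability of arbitrary boundary data for $V$) are handled in the paper exactly as you anticipate, via Lemma~\ref{lem} and the choice $m>n/2$.
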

	
	\begin{proof}
		In order to prove the desired approximation result, it is equivalent to show that 
		\[
		X=\Big\{  w=V\Big|_{\Omega \times (t_1,t_2)}\  \Big|\  V\in C^2(\overline{Q}) \text{ is a solution to \eqref{bigger domain solution}}  \Big\}
		\]
		is dense in 
		\[
		Y=\Big\{ v\in C([t_1,t_2];  L^2(\Omega))\cap  C^1([t_1,  t_2];  H^{-1}(\Omega)) \  \Big|\  v_{tt} - \Delta v +q v =0 \text{ in }\Omega\times (t_1,t_2) \Big\}
		\]
		in terms of $L^2\LC \Omega \times (t_1,t_2)\RC$. 
		By the Hahn-Banach theorem, it suffices to prove the following statement: if $f \in L^2 (\Omega \times (t_1,t_2))$ satisfies 
		\begin{align}\label{pairing =0 in X}
			\int^{t_2}_{t_1}\int_{\Omega} f w \, dx  dt=0, \quad \forall\  w\in X,
		\end{align}
		it follows that 
		\begin{align}\label{pairing =0 in Y}
			\int^{t_2}_{t_1}\int_{\Omega}  f v \, dx dt =0, \quad \forall\ v\in Y.
		\end{align}

		To this  aim, let $f\in L^2(\Omega \times (t_1,t_2))$ satisfy \eqref{pairing =0 in X} and set
		\begin{align*}
			\wt f(x, t)= \begin{cases}
				f(x, t) & \text{ in }\Omega \times (t_1,t_2), \\
				0 & \text{ in } \Omega \times ((0,t_1]\cup [t_2,T)).
			\end{cases}
		\end{align*}
		Assume that $\wt v\in H_0$ is the solution to the following backward wave equation:  
		\begin{align}\label{equation wt v}
			\begin{cases}
				\wt v_{tt}-\Delta \wt v+q\wt v=\wt f &\text{ in }Q, \\
				\wt v =0 &\text{ on }\Sigma,\\
				\wt v (x, T)=\wt v_t(x, T)=0 &\text{ in } \Omega.
			\end{cases}
		\end{align} 
		Then  for any solution  $V\in C^2(\overline{Q})$ to \eqref{bigger domain solution}  and 
		$w=V\Big|_{\Omega\times(t_1, t_2)}$, one has that 
		\begin{align}\label{pairing =0 on boundary}
			\begin{split}
				0&=\int^{t_2}_{t_1}\int_{\Omega} f w \, dxdt=\int_{Q} \wt f V \, dxdt\\
				&=\int_{Q} \Big(\wt v_{tt}-\Delta\wt v +q \wt v \Big)V \, dxdt=\int_{\Sigma}    \partial_\nu \tilde v\cdot V \, 
				dSdt.
			\end{split}
		\end{align}
		Since $V|_{\Sigma}$ can be  arbitrary function in $
		C^\infty_0(0, T; C^\infty(\Gamma))$,  
		we conclude that   the  associated  solution $V$ to (\ref{bigger domain solution}) satisfies   $V\in E^{m+2}$  and therefore,   $\partial_\nu \wt v =0$ on $\Sigma$. This implies that the solution $\wt v\in H_0$ to (\ref{equation wt v})  satisfies 
		\begin{align*}
			\begin{cases}
				\wt v_{tt}- \Delta\wt v+ q\wt v=\wt f &\text{ in }Q, \\
				\wt v =\partial_\nu \tilde v =0 &\text{ on }\Sigma, \\
				\wt v(x, T)= \wt v_t(x,  T)=0 &\text{ in } \Omega.
			\end{cases}
		\end{align*}
		In particular, in the domain  $\Omega \times ((0,t_1)\cup(t_2,T))$, $\tilde  v\in H_0$ satisfies  the following wave equation: 
		\begin{align*}
			\begin{cases}
				\wt v_{tt} - \Delta \wt v+ q\wt v=0 &\text{ in }\Omega \times ((0,t_1)\cup(t_2,T)), \\
				\wt v =\partial_\nu \tilde v =0 &\text{ on }\Sigma, \\
				\wt v(x,  T)=\wt v_t(x, T)=0 &\text{ in } \Omega.
			\end{cases}
		\end{align*}
		By the observability result in Lemma  
		\ref{lem:Controllability of linear wave equation},   
		we have 
		$$\wt v \equiv 0 \quad\mbox{in  }\ \Omega \times (0,t_1).$$
		By the uniqueness of solutions to wave equations, we  have that 
		$$\wt v \equiv 0 \quad\mbox{in  }\ \Omega \times (t_2, T).$$
		Hence,  
		\begin{align*}
			\begin{cases}
				\wt v(\cdot,  t_1)=\wt v_t(\cdot,  t_1)=\wt v (\cdot, t_2)=\wt v_t(\cdot,  t_2)=0  \text{ in }\Omega,\\[2mm]
				\wt v = \partial_\nu \tilde v =0 \text{ on }\Sigma.
			\end{cases}
		\end{align*}
		It follows that\begin{align*}
			\int^{t_2}_{t_1}\int_{\Omega} f v \, dx dt =\int^{t_2}_{t_1}\int_{\Omega} \Big(\wt v_{tt} - \Delta
			\wt v +q \wt v\Big) v \, dx dt=0,
		\end{align*}
		for any $v\in C([t_1,t_2];  L^2(\Omega))\cap C^1([t_1,t_2];  H^{-1}(\Omega))$ with $v_{tt} -\Delta v + qv=0$ in $\Omega \times (t_2,t_2)$ as desired. This completes the proof of  Theorem \ref{thm:Runge}.
	\end{proof}
	\begin{rmk}
		By the proof of Theorem $\ref{thm:Runge}$,   the approximation property still  holds for the  following more general hyperbolic equation:
		$$
		v_{tt}-\nabla\cdot(\sigma\nabla v)+q v=0 \quad\mbox{in }\Omega\times(t_1, t_2),
		$$
		where $\sigma=\LC \sigma^{ij}(x)\RC_{i,j=1}^n \in C^2(\overline{\Omega}; \R^{n\times n})$ is a symmetric uniformly positive definite matrix-valued function in $\overline\Omega$
		and $(\ref{condition on sigma})$ holds. Also,  $t_1$  and 
		$t_2$ satisfy $T_*<t_1<t_2<T$ for a suitable positive constant $T_*$.
	\end{rmk}
	
	\medskip
	
	Finally, we give a proof of Theorems  \ref{Main Thm:Simultaneous} and \ref{Main Thm:Simultaneous linear}.

	\begin{proof}[Proof of Theorem $\ref{Main Thm:Simultaneous}$]  The  whole  proof is 
		divided into five parts. 
		
		\medskip
		
		{\it Step 1. Initiation} 
		
		\medskip
		
		\noindent For $m>(n+1)/2$, consider the following (lateral) boundary data 
		\begin{align}\label{small lateral BCs}
			h(x,t;\eps)=\sum_{\ell=1}^M \eps_\ell g_\ell\quad \text{on} \   \Sigma, 
		\end{align}
		where $M\in \N$,  $g_1, \cdots,  g_M\in\mathcal{N}_{m+1}$ and $\eps=(\eps_1, \ldots,\eps_M)$ is a parameter vector in $\mathbb R^M$  with   $|\eps|=\sum\limits_{\ell=1}^{M}  |\epsilon_\ell|$  sufficiently small,  such that  
		$$
		\left\|\sum_{\ell=1}^M \eps_\ell g_\ell \right\|_{H^{m+1}(\Sigma)}<\frac{\delta}{2},\  \mbox{   for the }
		\delta>0 \mbox{  in  Theorem  }\ref{thm:well-posedness}.
		$$
		For $j=1,  2$,   let $u_j=u_j(x,t;\eps)\in E^{m+1}$ be solutions to
		\begin{align}\label{IBVP of simultaneous recovery-eps}
			\begin{cases}
				u_{j,  tt}-\Delta u_j+\tilde f_j(x,t,u_j)=0  &\text{ in }Q,\\
				u_j=\displaystyle\sum_{\ell=1}^M \eps_\ell g_\ell\ &\text{ on }\Sigma,\\
				u_j(x, 0)=\varphi_j(x), \quad   u_{j, t}(x, 0)=\psi _j(x) &\text{ in } \Omega,
			\end{cases}
		\end{align}
		where $(\varphi_j, \psi_j)\in H_0^{m+1}(\Omega)
		\times H_0^m(\Omega)$ with $\|(\varphi_j, \psi_j)\|_{H_0^{m+1}(\Omega)
			\times H_0^m(\Omega)}<\delta/2$ and $\tilde f_j$ are admissible coefficients.
		In particular, when $\eps=0 $,  $\wt u_j=u_j(\cdot, \cdot; 0)$ are the solutions to 
		\begin{align}\label{IBVP of simultaneous recovery-eps=0}
			\begin{cases}
				\wt u_{j,  tt}-\Delta \wt u_j +\tilde f_j(x,t, \wt u_j)=0  &\text{ in }Q,\\
				\wt u_j=0   &\text{ on }\Sigma,\\
				\wt u_j(x, 0)=\varphi_j, \quad   \wt u_{j,t}(x, 0)=\psi_j &\text{ in } \Omega.
			\end{cases}
		\end{align}
		We will apply the higher order linearization to the initial-boundary value problem \eqref{IBVP of simultaneous recovery-eps} around the solution $\wt u_j$ to \eqref{IBVP of simultaneous recovery-eps=0} in order to determine  informations on  $\tilde f_j$ for $j=1,2$.
		
		\medskip
		
		{\it Step 2. The first order linearization $(M=1)$}
		
		\medskip
		
		\noindent First,  we linearize the equation \eqref{IBVP of simultaneous recovery-eps} around $\wt u_j$, where $\wt u_j\in  E^{m+1}$ is the solution to \eqref{IBVP of simultaneous recovery-eps=0}, for $j=1,2$. It is easy to show that for $j=1, 2$ and $\ell=M=1$\footnote{In fact, the arguments hold for all $\ell=1,\ldots,M$, where we will use in steps 2-5.},  
		$$
		v_j^{(\ell)}(x,t)=\lim_{\eps_\ell \to 0} \frac{u_j(x,t)-\wt u_j(x,t)}{\eps_\ell}
		$$  satisfies the following wave equation: 
		\begin{align}\label{first linearization}
			\begin{cases}
				v_{j,  tt} ^{(\ell)}-\Delta v_j^{(\ell)}+\tilde q_j v_j ^{(\ell)}=0 &\text{ in }Q, \\
				v_j ^{(\ell)}= g _\ell& \text{ on }\Sigma,\\
				v_j^{(\ell)}(x,  0)= v_{j, t}^{(\ell)}(x, 0)=0 &\text{ in }\Omega,
			\end{cases}
		\end{align}
		where 
		\[
		\tilde q_j (x,t)=\tilde f_{j,  u} (x,t,\wt u_j(x,t))\  \text{ in }Q\quad \mbox{ and }\quad \tilde q_j\in E^{m+1}.
		\]
		It is worth noting that both $\wt u_j$ and $v_j^{(\ell)}$ in \eqref{IBVP of simultaneous recovery-eps=0} and \eqref{first linearization} are still unknown, respectively, since they involve unknown coefficients  and initial data.  In this  step,   we will  show that 
		\begin{align}\label{claim1}
			\tilde f_{1, u} (x,t,\wt u_1(x, t))=\tilde f_{2, u} (x,t, \wt u_2(x, t))\ \text{ in }  \Omega \times (0,T).
		\end{align}
		Recall that we have the same input-output maps	
		$$\Lambda^T_{\varphi_1 , \psi_1, \tilde f_1}(h)=\Lambda^T_{\varphi_2, \psi_2,  \tilde f_2}(h), \quad \text{ for any }h\in  E_\delta.$$
		Hence, with the same boundary data at hand, one has 
		\begin{align}\label{first linearized DN maps agree}
			\begin{array}{rll}
				&v_1^{(\ell)}(x, 0)=v_2^{(\ell)}(x, 0), \quad&v_{1, t}^{(\ell)}(x, 0)=v_{2, t}^{(\ell)}(x, 0), \\[2mm]
				&v_1^{(\ell)}(x, T)=v_2^{(\ell)}(x, T), \quad  &v_{1, t}^{(\ell)} (x, T)= v_{2, t}^{(\ell)}(x, T),\\[2mm]
				&\left. v_1^{(\ell)} \right|_{\Sigma}= \left. v_2^{(\ell)}\right|_{\Sigma}, \quad &	\left. \p _\nu v_1^{(\ell)} \right|_{\Sigma}= \left. \p_\nu  v_2^{(\ell)}\right|_{\Sigma}, 
			\end{array}
		\end{align} 
		for $\ell=M=1$. 
		
		\medskip
		
		Now, subtracting \eqref{first linearization} with $j=1,2$, we have 
		\begin{align}\label{first linearization subtraction}
			\begin{cases}
				v^{(\ell)}_{tt}- \Delta v^{(\ell)} +\tilde q_1 v^{(\ell)}= (\tilde q_2 -\tilde q_1 )v^{(\ell)}_2 & \text{ in }Q, \\
				v^{(\ell)}=0 &\text{ on }\Sigma, \\
				v^{(\ell)}(x,0)=v^{(\ell)}_t(x,0)=0 &\text{ in }\Omega,
			\end{cases}
		\end{align}
		where $v^{(\ell)}:=v^{(\ell)}_1-v^{(\ell)}_2$. Let $\tilde  v_1^{(\ell)}\in 
		C^2(\overline{Q})$ be a solution to
		\begin{align}\label{tilde v_1 equation}
			\tilde  v_{1,tt}^{(\ell)}-\Delta \tilde  v_1^{(\ell)} + \tilde q_1 \tilde  v_1^{(\ell)}=0 \text{ in }Q.
		\end{align}
		Multiplying both sides of  the first equation   in \eqref{first linearization subtraction} by $\tilde  v_1^{(\ell)}$, by  \eqref{first linearized DN maps agree} and an integration by parts,  yields that  
		\begin{align}\label{integral id of 1st linearized equation1}
			\int_{Q} \LC \tilde q_2-\tilde q_1 \RC \tilde v_1^{(\ell)}v_2^{(\ell)}\, dxdt =0.
		\end{align}
		With the admissible conditions in  Definition \ref{Def: admissible coefficients}, \eqref{integral id of 1st linearized equation1} is equivalent to 
		\begin{align}\label{integral id of 1st linearized equation2}
			\int_{t_1}^{t_2}\int_{\Omega} \LC \tilde q_2-\tilde q_1 \RC \tilde v_1^{(\ell)}v_2^{(\ell)}\, dxdt =0.
		\end{align}
		In fact,  by the above arguments, the  equality (\ref{integral id of 1st linearized equation2}) still holds  for complex-valued solutions  $\tilde v_1^{(\ell)}$ and $v_2^{(\ell)}$, respectively,   to (\ref{first linearization})
		with $j=2$ and (\ref{tilde v_1 equation}).
		\medskip
		
		On the other hand,  let $\bm{v}_j\  (j=1, 2)$  be the complex geometrical optics (CGO) solutions  to 
		$$
		\bm{v}_{j, tt}-\Delta \bm{v}_j+ \tilde q_j \bm{v}_j=0 \text{ in }\Omega\times (t_1, t_2)
		$$ in the form in Appendix \ref{Section A}: 
		\begin{align}\label{CGOs in the proof}
			\begin{split}
				\bm{v}_1(x,t)=&e^{-\mbox{i}\tau [\eta(x)+t]}a_1(x,t)+R_1^{(\tau)}(x,t), \\[2mm]
				\bm{v}_2(x,t)=&e^{\mbox{i}\tau [\eta(x)+t]}a_2(x,t)+R_2^{(\tau)}(x,t),
			\end{split}
		\end{align}
		where $\mbox{i}=\sqrt{-1}$ denotes the  imaginary unit,  $\tau\in\mathbb R$ with $|\tau|>1$,  
		$\eta(x)=|x-x_0|$ for an $x_0\in\overline{\Omega}$  and $a_j(\cdot,\cdot)$ has the form \eqref{amplitude function}. 
		Also,  $R_j^{(\tau)}\in L^2(Q)$ is the remainder term, which fulfills the conditions \eqref{zero initial and BC} and \eqref{L2 decay}, for $j=1,2$.

		By  the approximation result (Theorem \ref{thm:Runge}), there are two sequences of complex-valued functions $\left\{ v_{k}^1 \right\}_{k\in \N}$ and  $\left\{ v_{k}^2 \right\}_{k\in \N}$,  such that 
		for $j=1, 2$,  $ v_{k}^j\in C^2(\overline{Q};  \mathbb C)$ is the solution to
		\begin{align}\label{Runge solution1}
			\begin{cases}
				v^j_{k, tt}-\Delta v^j_k+\tilde q_j v^j_{k}=0 &\text{ in }Q, \\[2mm]
				v_{k}^j(x,0)=v^j_{k, t}(x,0)=0 &\text{ in }\Omega,
			\end{cases}
		\end{align} 
		and 
		\begin{align}\label{Runge solution2}
			v_{k}^j\rightarrow \bm{v}_j\quad \text{ in }L^2(\Omega \times (t_1,t_2); \mathbb C), \text{ as }k\to \infty.
		\end{align}
		Choosing $\tilde v_1^{(\ell)}=v_{k}^1$ and $v_2^{(\ell)}=v_{k}^2$ in \eqref{integral id of 1st linearized equation2}, and taking limit, as $k$ tends to $\infty$, one obtains 
		\begin{align}\label{integral id of 1st linearized equation3}
			\int^{t_2}_{t_1}\int_{\Omega} \LC \tilde q_2-\tilde q_1 \RC\bm{v}_1\bm{v}_2\, dxdt =0.
		\end{align}
		It remains to analyze the product $\bm{v}_1\bm{v}_2$ of CGO solutions.
		
		By a direct computation, we have that 
		\begin{align*}
			\begin{split}
				\int^{t_2}_{t_1}\int_{\Omega} \LC \tilde q_2-\tilde q_1 \RC\bm{v}_1\bm{v}_2\, dxdt=\mathbb I +\mathbb I_\tau,
			\end{split}
		\end{align*}
		where 
		\begin{align*}
			\mathbb I =	\int^{t_2}_{t_1}\int_{\Omega} \Big(\tilde q_2-\tilde q_1\Big) a_1 a_2 \, dxdt
		\end{align*}
		and 
		\begin{align*}
			\begin{split}
				\mathbb I_\tau = &\int^{t_2}_{t_1}\int_{\Omega} \LC \tilde q_2-\tilde q_1 \RC \Big\{
				e^{-\mbox{i}\tau [\eta(x)+t]} a_1(x, t)R_2^{(\tau)}(x,t) \\
				& \qquad \qquad + e^{\mbox{i}\tau [\eta(x)+t]} a_2(x, t) R_1^{(\tau)}(x, t) +R_1^{(\tau)}(x,t)R_2^{(\tau)}(x,t)   \Big\} dxdt.
			\end{split}
		\end{align*}
		Since $\tilde q_1, \tilde q_2, a_1$ and $a_2$ are  bounded in $Q$,  and $\norm{R_j^{(\tau)}}_{L^2(\Omega \times (t_1,t_2))}\to 0$, as $|\tau|\to \infty$, for $j=1,2$, it 
		follows that 
		$\mathbb I_\tau\rightarrow 0$, as $\tau\rightarrow \infty$. Hence, 
		the integral identity \eqref{integral id of 1st linearized equation3} implies  
		\begin{align}\label{integral id of 1st linearized equation4}
			\int^{t_2}_{t_1}\int_{\Omega} \LC \tilde q_2-\tilde q_1 \RC a_1(x,t) a_2(x,t) \, dxdt=0.
		\end{align}
		It remains to prove that \eqref{integral id of 1st linearized equation4} implies $\tilde q_1=\tilde q_2$ in $Q$.
		By applying the similar arguments  in \cite[Section 2]{19}, we  conclude that $\tilde q_1 =\tilde q_2 $ in $Q$ as desired.
		Meanwhile,  set
		\begin{align}\label{q(x)}
			q(x,t)=\tilde q_1 (x,t)=\tilde q_2(x,t)  \text{ in } Q.
		\end{align}
		By the uniqueness of solutions to (\ref{first linearization}), one has that 
		\begin{align}\label{uniqueness of solution of the 1st linearization}
			v^{(\ell)}:=v_1^{(\ell)}=v_2^{(\ell)} \quad \text{in}\quad Q,\  \mbox{ for }\ell=1.
		\end{align}

		\medskip
		
		{\it Step 3. The second order linearization  $(M=2)$}
		
		\medskip

		\noindent  For $m=2$,    we  differentiate  (\ref{IBVP of simultaneous recovery-eps}) 
		with respect to different parameters $\eps_1$ and $\eps_2$. 
		It is easy to show that the derivatives $w^{(2)}_j$ $(j=1, 2)$ satisfy
		\begin{align}\label{second linearization}
			\begin{cases}
				w_{j, tt} ^{(2 )}-\Delta w_j^{(2)}+q(x,t) w_j^{(2)} +\tilde f_{j, uu} (x,t,\tilde u_j)v^{(1)}v^{(2)}=0 &\text{ in }Q, \\
				w_j^{(2)} = 0 & \text{ on }\Sigma,\\
				w_j^{(2)}(x, 0)=w_{j, t}^{(2)}(x, 0)=0 &\text{ in }\Omega, 
			\end{cases}
		\end{align}
		where $q, \tilde f_{j, uu}(\cdot, \cdot, \tilde u_j)\in E^{m+1}$ and $v^{(1)}, v^{(2)}\in E^{m+1}$ satisfy 
		\begin{align*}
			\begin{cases}
				v_{tt} ^{(\ell)}-\Delta v^{(\ell)}+q(x,t)v^{(\ell)}=0 &\text{ in }Q, \\
				v^{(\ell)}= g_\ell& \text{ on }\Sigma,\\
				v^{(\ell)}(x,  0)= v_{t}^{(\ell)}(x, 0)=0 &\text{ in }\Omega,
			\end{cases}
		\end{align*}
		here  $g_1,  g_2\in \mathcal{N}_{m+1}$ are arbitrarily given.

		\medskip
		
		Next, we will prove that 
		\begin{align}\label{claim2}
			\tilde f_{1,  uu} (x,t,\wt u_1(x, t))=\tilde f_{2, uu}(x,t,\wt u_2(x, t)) \quad \mbox{in}\quad Q.
		\end{align}
		Similar to the arguments in the first  order linearization, with the equal input-output maps 
		at hand, we  have  that
		\begin{align}\label{second linearized DN maps agree}
			\begin{array}{rll}
				&w_1^{(2)}(x, 0)=w_2^{(2)}(x, 0), \quad&w_{1, t}^{(2)}(x, 0)=w_{2, t}^{(2)}(x, 0), \\[2mm]
				&w_1^{(2)}(x, T)=w_2^{(2)}(x, T), \quad  &w_{1, t}^{(2)} (x, T)= w_{2, t}^{(2)}(x, T),\\[2mm]
				&w_1^{(2)} \Big|_{\Sigma}=w_2^{(2)}\Big|_{\Sigma}, \quad &	\p _\nu w_1^{(2)} \Big|_{\Sigma}=\p_\nu  w_2^{(2)}\Big|_{\Sigma}.
			\end{array}
		\end{align}

		Let $v^{(0)}$ be any solution to
		\begin{eqnarray}\label{AA}
			\left\{
			\begin{array}{ll}
				v_{tt}^{(0)}-\Delta   v^{(0)} + q  v^{(0)}=0 &\text{ in } Q,\\[2mm]
				v^{(0)}(x, 0)=v^{(0)}_t(x, 0)=0 &\text{ in }\Omega.
			\end{array}
			\right.
		\end{eqnarray}
		By subtracting the equations \eqref{second linearization} associated to $j=1,  2$, an integration by parts yields
		\begin{align}\label{second integral id}
			\begin{split}
				&\int_{Q} \Big[\tilde f_{1, uu} (x,t,\wt u_1(x, t))-\tilde f_{2, uu}(x,t,\wt u_2(x, t))\Big] v^{(0)}  v^{(1)}v^{(2)}  \, dxdt \\
				=&\int^{t_2}_{t_1}\int_{\Omega} \Big[\tilde f_{1, uu} (x,t,\wt u_1(x, t))-\tilde f_{2, uu}(x,t,\wt u_2(x, t))\Big]  v^{(0)} v^{(1)}v^{(2)}  \, dxdt =0,
			\end{split}
		\end{align}
		where we have used the admissible conditions for the coefficients.

		As in the first step, we consider the CGO solutions $\bm{v}_1$  and $\bm{v}_2$ of the form \eqref{CGOs in the proof}.
		By using the approximation property again,  we  have that 
		$$ 
		\Big[\tilde f_{1, uu} (x,t,\wt u_1(x, t))-\tilde f_{2, uu}(x,t, \wt u_2(x, t))\Big] v^{(0)}(x, t)=0\   \text{ in } \Omega\times(t_1, t_2).
		$$
		Hence,  for any solution $v_{(0)}$ to (\ref{tilde v_1 equation}), 
		$$
		\int^{t_2}_{t_1}\int_{\Omega} \Big[\tilde f_{1, uu} (x,t,\wt u_1(x, t))-\tilde f_{2, uu}(x,t,\wt u_2(x, t))\Big]  v^{(0)} v_{(0)}  \, dxdt =0.
		$$
		By choosing $v^{(0)}$ and $v_{(0)}$ as the suitable  CGO  solutions again,   we have (\ref{claim2}) as desired. 
		Furthermore, by  the uniqueness of solutions to \eqref{second linearization}, one can immediately obtain 
		\[
		w_1^{(2)}= w_2^{(2)} \quad\text{ in }Q.
		\]

		\medskip
		
		{\it Step 4. The higher order linearization $(M>2)$}
		
		\medskip
		
		\noindent  By the higher order linearization  and the method of induction, we may find $M$-th order derivative of \eqref{IBVP of simultaneous recovery-eps}  and prove that
		\begin{align}\label{claim3}
			\p_u^M \tilde f_1 (x,t,  \wt u_1(x, t))=\p_u^M \tilde f_2 (x,t, \wt u_2(x, t))\quad \text{ in }Q,
		\end{align}
		for any $M=3, 4, \cdots$. To this  aim, we
		first  assume that 
		$$
		\p_u^k \tilde f_1 (x,t, \wt u_1(x, t))=\p_u^k \tilde f_2 (x,t, \wt u_2(x, t))\  \text{ in }Q, \  \text{ for any }k=1,\dots, M-1.
		$$
		Similar to previous steps, 
		by differentiating  \eqref{IBVP of simultaneous recovery-eps} with respect to $\eps_1,\ldots, \eps_{M-1}$ and $\eps_{M}$, one can obtain 
		\begin{align*}
			\int^{t_2}_{t_1}\int_{\Omega}\Big[\p_u ^M \tilde f_1 (x,t, \wt u_1(x, t)) - \p_u^M \tilde f_2 (x,t, \wt u_2(x, t))\Big]  v^{(0)}v^{(1)} \cdots v^{(M)}\, dxdt =0,
		\end{align*}
		where $v^{(\ell)}$ $(\ell=0, 1, \cdots, M)$ are solutions to (\ref{AA}).

		Applying the similar approximation properties   in Step 3,  we  have that
		\begin{align}\label{higher order integral id}	\int^{t_2}_{t_1}\int_{\Omega}\Big[\p_u ^M \tilde f_1 (x,t,u_1^{(0)}(x, t)) - \p_u^M \tilde f_2 (x,t,u_2^{(0)}(x, t))\Big]  v^{(0)}\bm{v}_1\bm{v}_2v^{(3)} \cdots v^{(M)}\, dxdt =0,
		\end{align}
		where  $\bm{v}_1$  and $\bm{v}_2$ are  CGO solutions  of the form \eqref{CGOs in the proof}.
		This implies that
		$$
		\Big[\p_u ^M \tilde f_1 (x,t,u_1^{(0)}(x, t)) - \p_u^M \tilde f_2 (x,t,u_2^{(0)}(x, t))\Big]  v^{(0)}v^{(3)} \cdots v^{(M)}
		=0\quad\mbox{ in }\Omega\times(t_1, t_2).
		$$
		Similar to Step 3, 	if $M$ is odd,  we  take  successively  CGO solutions pairs $v^{(0)}$ and $v^{(3)},$ $\cdots,$ $v^{(M-1)}$ and $v^{(M)}$. Otherwise,  we add a CGO solution to this equality, in order to guarantee even solutions to  be multiplied together.
		It   follows that
		\begin{equation}\label{RRR}
			\p_u ^M \tilde f_1 (x,t, \wt u_1(x, t))=\p_u^M \tilde f_2 (x,t, \wt u_2(x, t))\quad\mbox{ in }Q.
		\end{equation}

		\medskip
		
		{\it Step 5. The determination of  initial data and coefficients}
		
		\medskip
		
		\noindent Recall that  $\wt u_j$ $(j=1, 2)$ are the solutions to the following semilinear wave equation:
		\begin{align*}
			\begin{cases}
				\wt u_{j,  tt}-\Delta \wt u_j +\tilde f_j(x,t, \wt u_j)=0  &\text{ in }Q,\\
				\wt u_j=0   &\text{ on }\Sigma,\\
				\wt u_j(x, 0)=\varphi_j, \  \wt u_{j,t}(x, 0)=\psi_j &\text{ in } \Omega.
			\end{cases}
		\end{align*}
		By the admissible property of $\tilde f_1$ and $\tilde f_2$,
		\begin{eqnarray}\label{LLL}
			\begin{array}{ll}
				&\tilde  f_1(x, t, \wt u_1(x, t))-\tilde  f_2(x, t, \wt u_2(x, t))\\[2mm]
				=&\displaystyle\sum\limits_{k=1}^\infty \frac{\partial_u^k  \tilde f_2(x, t, \wt u_2(x,  t))}{k!}\Big[-\wt u_2(x, t)\Big]^k
				-\sum\limits_{k=1}^\infty \frac{\partial_u^k  \tilde f_1(x, t, \wt u_1(x,  t))}{k!}\Big[-\wt u_1(x, t)\Big]^k\\
				=&\displaystyle\sum\limits_{k=1}^\infty \frac{\partial_u^k  \tilde f_1(x, t, \wt u_1(x,  t))(-1)^k}{k!}\Big\{\Big[\wt u_2(x, t)\Big]^k-\Big[\wt u_1(x, t)\Big]^k\Big\}.
			\end{array}
		\end{eqnarray}
		Since both $\wt u_1$
		and $\wt u_2$ are bounded, set $R=\|\wt u_1\|_{L^\infty(Q)}+\|\wt u_2\|_{L^\infty(Q)}$. Then, for any $L>0$ and $(x, t)\in Q$, 
		\begin{align*}
			&\left|\frac{\tilde  f_1(x, t, \wt u_1(x, t))-\tilde  f_2(x, t,\wt u_2(x, t))}{\wt u_1(x, t)-\wt u_2(x, t)}\right|\\
			=&\left|\sum\limits_{k=1}^\infty \frac{\partial_u^k  \tilde f_1(x, t, \wt u_1 (x,  t))}{k!}(-1)^{k+1}\Big\{\Big[\wt u_1(x, t)\Big]^{k-1}+\Big[\wt u_1(x, t)\Big]^{k-2}\wt u_2(x, t)+\cdots\right.\\
			&\quad \left.+\wt u_1 (x, t)\Big[\wt u_2(x, t)\Big]^{k-2}+\Big[\wt u_2(x, t)\Big]^{k-1}\Big\}\right|\\
			\leq &\sum\limits_{k=1}^\infty \left|\partial_u^k  \tilde f_1(x, t, \wt u_1(x,  t))\right|
			\frac{R^{k-1}}{(k-1)!}  \\
			\leq &\sum\limits_{k=1}^\infty \frac{k R^{k-1} }{L^k} \sup\limits_{|s-\wt u_1(x, t)|=L}  
			|\tilde f_1(x, t, s)|.
		\end{align*}
		Choose  $L=2(R+1)$. By the  admissibility of $\tilde f_1$ and $\tilde f_2$, 
		$$
		G(\cdot, \cdot)=\frac{\tilde  f_1(\cdot, \cdot, \wt u_1(\cdot, \cdot))-\tilde  f_2(\cdot, \cdot, \wt u_2(\cdot, \cdot))}{\wt u_1(\cdot, \cdot)-\wt u_2(\cdot, \cdot)}
		\in L^\infty(Q).
		$$
		Set $w=\wt u_1-\wt u_2$. It is easy to see that
		\begin{align*}
			\begin{cases}
				w_{tt}-\Delta w +Gw=0  &\text{ in }Q,\\
				w=0   &\text{ on }\Sigma,\\
				w(x, 0)=\varphi_1-\varphi_2, \quad w_{t}(x, 0)=\psi_1-\psi_2   &\text{ in } \Omega.
			\end{cases}
		\end{align*}
		By $\Lambda^T_{\varphi_1, \psi_1,  \tilde f_1}(0)=\Lambda^T_{\varphi_2, \psi_2,  \tilde f_2}(0)$
		and the  observability result in  Lemma  \ref{lem:Controllability of linear wave equation}, 
		$$
		\varphi_1=\varphi_2,\quad \psi_1=\psi_2 \quad \mbox{ and }\quad \wt u_1=\wt u_2.
		$$
		By (\ref{LLL}),  
		$$
		\tilde  f_1(x, t, \wt u_1(x, t))=\tilde  f_2(x, t, \wt u_2(x, t)) \quad\mbox{ in }Q.  
		$$
		Furthermore,  notice that for $j=1, 2$ and any $(x, t, s)\in Q\times\mathbb R$,
		\begin{align*}
			&&\tilde f_j(x, t, s)=\tilde f_j(x, t, \wt u_j(x, t))
			+\sum\limits_{k=1}^{\infty} 
			\frac{\partial_u^k \tilde f_j(x, t, \wt u_j(x, t))}{k!}\LC s-\wt u_j(x, t)\RC^k.
		\end{align*}
		With \eqref{RRR} at hand, this implies that $\tilde f_1(x, t, s)=\tilde f_2(x, t, s)$ in  $Q\times\mathbb R$.
	\end{proof}
	
	\medskip
	
	It remains to prove Corollary \ref{Main Thm:Simultaneous linear}.
	\begin{proof}[Proof of Corollary $\ref{Main Thm:Simultaneous linear}$]
		The proof is similar to the one of Theorem \ref{Main Thm:Simultaneous}. 
		For $j=1,2$,  we denote by  $u_j\in E^{m+1}$  the solution to 
		\begin{align*}
			\begin{cases}
				u_{j,tt} - \Delta u_j + q_j u_j =0 &\text{ in }Q,\\
				u_j=h &\text{ on }\Sigma,\\
				u_j(x,0)=\varphi_j(x), \  u_{j,t}(x,0)=\psi_j(x) &\text{ in }\Omega,
			\end{cases}
		\end{align*}
		and denote by $\wt u_j$  the solution to
		\begin{align*}
			\begin{cases}
				\wt	u_{j,tt} - \Delta \wt u_j + q_j \wt u_j =0 &\text{ in }Q,\\
				\wt	u_j=0 &\text{ on }\Sigma,\\
				\wt	u_j(x,0)=\varphi_j(x), \  \wt u_{j,t}(x,0)=\psi_j(x) &\text{ in }\Omega.
			\end{cases}
		\end{align*}
		Let $v_j=u_j-\wt u_j$ and  $v_j\in E^{m+1}$ is the solution to
		\begin{align*}
			\begin{cases}
				v_{j,tt} - \Delta v_j + q_j v_j =0 &\text{ in }Q,\\
				v_j=h &\text{ on }\Sigma,\\
				v_j(x,0)=0, \  v_{j,t}(x,0)=0 &\text{ in }\Omega,
			\end{cases}
		\end{align*}
		for $j=1,2$. 
		By applying the same technique as in the proof of Theorem \ref{Main Thm:Simultaneous}, we first are able to determine $q_1=q_2$ in $Q$. Then,  by  the observability result in Lemma \ref{lem:Controllability of linear wave equation}, we can derive that $\varphi_1=\varphi_2$ and $\psi_1=\psi_2$ in $\Omega$ as desired. This proves the assertion.
	\end{proof}

	\begin{rmk}
		It is worth mentioning that the boundedness of CGO solutions plays an essential role in the proof of Theorem $\ref{Main Thm:Simultaneous}$. With the higher order linearization technique at hand, one can 
		expect the integral identity $(\ref{higher order integral id})$ holds, with products of $M$ solutions of the (linear) wave equation. 
		\begin{itemize}
			\item[(1)] In the elliptic case, one may choose  $v^{(1)}$ and  $v^{(2)}$ as suitable CGO solutions, such that $\{v^{(1)}v^{(2)}\}$ forms a dense subset in $L^1(Q)$. Meanwhile, by applying the maximum principle for the second order linear  elliptic equations, it is not hard to construct bounded positive solutions $v^{(3)},\cdots , v^{(M)}$, such that one can easily derive the global uniqueness result.
			
			\item[(2)] In the hyperbolic case, we do not have the maximum principle and therefore, we do not know the sign and boundedness of certain solutions $v^{(3)},\cdots, v^{(M)}$ in the integral identity $\eqref{higher order integral id}$. Hence, we seek for  the CGO solutions. Indeed, when the CGO solutions are of the form $\eqref{CGOs in the proof}$, they are bounded in $Q$. We refer the readers to Appendix \ref{Section A} for more details about CGO solutions used in our work.
		\end{itemize}
	\end{rmk}

	\noindent $\bullet$ \textbf{Conclusion.}
	
	In this work, we use different measurements to study related inverse problems.
	\begin{itemize}
		\item[(1)] By using the passive measurement, we 
		are able to determine initial data, whenever zeroth order coefficients are  known a priori. On the other hand, the unique determination for initial data cannot hold when nonlinearities are unknown.  However, if the unknown nonlinearities belong to certain classes, one can still determine  the initial data via the passive measurement.

		\item[(2)] By imposing  the admissible  conditions on coefficients, one can recover  initial data and coefficients simultaneously via a hyperbolic type approximation property and the completeness products of
		solutions to wave equations.
		
		\item[(3)] The nonlinearity helps us to study the simultaneous recovery inverse problem. In our approach, when we used the first linearization, the unknown initial data disappears in the first linearized wave equation \eqref{first linearization}. 
		
		\item[(4)] To our best knowledge, for the linear counterpart, Corollary \ref{Main Thm:Simultaneous linear} would be the first result for the simultaneous recovery for both initial data and zero order coefficients. Furthermore, via the proofs of Theorem \ref{Main Thm:Simultaneous} and Corollary \ref{Main Thm:Simultaneous linear}, one can see that the smallness conditions for the semilinear wave equation is needed only for the local well-posedness, but not in the study of inverse problems.
		
	\end{itemize}

	\appendix
	
	\section{Complex geometrical optics (CGO) solutions}\label{Section A}
	
	In this section, let us review the known complex geometrical optics (CGO) solutions for  wave equations with potential. Even though there might be some other references already proved the existence of CGO solutions, we follow the ideas of Kian-Oksanen \cite{19} and give exponential type solutions to a wave equation for the sake of self-containedness of this work.

	Let $\Omega \subseteq \R^n$ be a bounded domain with smooth boundary $\Gamma$, for $n\geq 2$ and $t_1, t_2$ be two positive numbers with $t_1<t_2$\footnote{In the applications, the numbers $t_1,t_2$ are given by Definition \ref{Def: admissible coefficients}.}.
	For any $q\in E^{m+1}$,   consider the wave equation:
	$$
	v_{tt}-\Delta v+qv =0 \quad \mbox{in }\Omega\times (t_1,t_2)
	$$ and its CGO solutions of the  form
	\[
	v(x,t)=a(x,t)e^{\mbox{i}\tau[\eta(x)+t]} + R^{(\tau)} (x,t) \quad\text{ in }\Omega \times (t_1 , t_2), \]
	where $\tau$ is a  real  number  with $|\tau|>1$, 
	$\eta(x)=|x-x_0|$ for an $x_0\in \mathbb R^n\setminus  \overline{\Omega}$, 
	and $R^{(\tau)}$ satisfies
	\begin{align}\label{zero initial and BC}
		\begin{cases}
			R^{(\tau)}_{tt}-\Delta R^{(\tau)}+qR^{(\tau)}=-e^{\mbox{i}\tau(|x-x_0|+t)}\Big(a_{tt}-\Delta a+qa\Big) &\mbox{ in }\Omega\times (t_1, t_2),\\
			R^{(\tau)}=0 & \text{ on }\Gamma\times (t_1,t_2), \\
			R^{(\tau)}(x,  t_1)=R^{(\tau)}_t(x, t_1)=0 & \text{ in }\Omega,
		\end{cases}
	\end{align}
	and 
	\begin{align}\label{L2 decay}
		\lim_{|\tau|\to \infty} \norm{R^{(\tau)}}_{L^2 (\Omega\times(t_1, t_2))}=0.
	\end{align}
	Furthermore, $a(\cdot, \cdot)$ satisfies 
	\begin{align}\label{transport equation}
		2a_{t} - 2 \nabla \eta \cdot \nabla a -\Delta \eta  a =0\quad \text{ in }\Omega\times(t_1, t_2).
	\end{align}

	Without loss of generality, we may assume that $x_0=0$ in the following arguments.
	By  \eqref{transport equation},  $a(\cdot, \cdot)$  satisfies the following equation:
	\begin{align}\label{transport equation1}
		a_t -\frac{x}{|x|} \cdot \nabla a -\frac{n-1}{2|x|}   a=0.
	\end{align}
	As in \cite{19}, we write  $x\in \R^n$ in terms of the polar coordinate $(r,\theta)\in [0, \infty)\times \mathbb S^{n-1}$ and the  metric takes the form $g(r, \theta)=dr^2+g_0(r, \theta)$. Then, \eqref{transport equation1} becomes 
	\begin{align}\label{transport equation2}
		a_t - a_r - \frac{b_r}{4b} \cdot a =0,
	\end{align}
	where 
	\begin{align*}
		b(r,\theta)=\det g_0(r,  \theta).
	\end{align*}
	
	For any $h=h(\theta)\in C^\infty(\mathbb{S}^{n-1})$, $\chi(\cdot)\in C^\infty(\R)$ and $\mu>0$, set 
	\begin{align}\label{amplitude function}
		a(r,\theta,t)=e^{-\frac{\mu (r+t)}{2}} \chi (r+t) h(\theta) b(r,\theta)^{-\frac{1}{4}}.
	\end{align}
	Then $a(\cdot,  \cdot)$  in (\ref{amplitude function}) is the desired solution to the transport equation \eqref{transport equation2}.
	Similarly, 
	$v(x,t)=a(x,t)e^{-\mbox{i}\tau[\eta(x)+t]} + R^{(\tau)} (x,t)$
	is  also the  CGO solution to the wave equation.
	
	\vskip0.5cm

	\noindent\textbf{Acknowledgment.} The work of Y.-H. Lin is partially supported by the Ministry of Science and Technology Taiwan, under the program: 112-2628-M-A49-003. The work of H. Liu is supported by a startup fund from City University of Hong Kong and the Hong Kong RGC General Research Funds (projects 12301420, 12302919 and 12301218).
	The  work of  X. Liu is partially supported  by NSF of China under  grants 11871142 and 11971320.

	\bibliographystyle{alpha}
	
	\bibliography{ref}

\newcommand{\etalchar}[1]{$^{#1}$}
\begin{thebibliography}{LLPMT21}

\bibitem[BHKS18]{1}
Tommi Brander, Bastian Harrach, Manas Kar, and Mikko Salo.
\newblock Monotonicity and enclosure methods for the {$p$}-{L}aplace equation.
\newblock {\em SIAM J. Appl. Math.}, 78(2):742--758, 2018.

\bibitem[CB08]{4}
Yvonne Choquet-Bruhat.
\newblock {\em General relativity and the {E}instein equations}.
\newblock OUP Oxford, 2008.

\bibitem[CFK{\etalchar{+}}21]{2}
C{\u{a}}t{\u{a}}lin~I C{\^a}rstea, Ali Feizmohammadi, Yavar Kian, Katya
  Krupchyk, and Gunther Uhlmann.
\newblock The {C}alder\'{o}n inverse problem for isotropic quasilinear
  conductivities.
\newblock {\em Advanced in Mathematics, to appear}, 2021.

\bibitem[CLOP19]{3}
Xi~Chen, Matti Lassas, Lauri Oksanen, and Gabriel~Paternain Paternain.
\newblock Detection of {H}ermitian connections in wave equations with cubic
  non-linearity.
\newblock {\em arXiv preprint arXiv:1902.05711}, 2019.

\bibitem[dHUW18]{12}
Maarten de~Hoop, Gunther Uhlmann, and Yiran Wang.
\newblock Nonlinear interaction of waves in elastodynamics and an inverse
  problem.
\newblock {\em Mathematische Annalen}, pages 1--31, 2018.

\bibitem[DLL19]{5}
Youjun Deng, Jinhong Li, and Hongyu Liu.
\newblock On identifying magnetized anomalies using geomagnetic monitoring.
\newblock {\em Archive for Rational Mechanics and Analysis}, 231(1):153--187,
  2019.

\bibitem[DLL20]{6}
Youjun Deng, Jinhong Li, and Hongyu Liu.
\newblock On identifying magnetized anomalies using geomagnetic monitoring
  within a magnetohydrodynamic model.
\newblock {\em Archive for Rational Mechanics and Analysis}, 235(1):691--721,
  2020.

\bibitem[DLU19]{7}
Youjun Deng, Hongyu Liu, and Gunther Uhlmann.
\newblock On an inverse boundary problem arising in brain imaging.
\newblock {\em Journal of Differential Equations}, 267(4):2471--2502, 2019.

\bibitem[DZZ08]{8}
Thomas Duyckaerts, Xu~Zhang, and Enrique Zuazua.
\newblock On the optimality of the observability inequalities for parabolic and
  hyperbolic systems with potentials.
\newblock {\em Annales de l'Institut Henri Poincar{\'e} C, Analyse non
  lin{\'e}aire}, 25(1):1--41, 2008.

\bibitem[FLL21]{9}
Ali Feizmohammadi, Tony Liimatainen, and Yi-Hsuan Lin.
\newblock An inverse problem for a semilinear elliptic equation on conformally
  transversally anisotropic manifolds.
\newblock {\em arXiv preprint arXiv:2112.08305}, 2021.

\bibitem[FO20]{10}
Ali Feizmohammadi and Lauri Oksanen.
\newblock An inverse problem for a semi-linear elliptic equation in
  {R}iemannian geometries.
\newblock {\em Journal of Differential Equations}, 269(6):4683--4719, 2020.

\bibitem[HL23]{11}
Bastian Harrach and Yi-Hsuan Lin.
\newblock Simultaneous recovery of piecewise analytic coefficients in a
  semilinear elliptic equation.
\newblock {\em Nonlinear Analysis}, 228:113188, 2023.

\bibitem[IN95]{15}
Victor Isakov and A~Nachman.
\newblock Global uniqueness for a two-dimensional elliptic inverse problem.
\newblock {\em Trans.of AMS}, 347:3375--3391, 1995.

\bibitem[IS94]{16}
Victor Isakov and John Sylvester.
\newblock Global uniqueness for a semilinear elliptic inverse problem.
\newblock {\em Communications on Pure and Applied Mathematics},
  47(10):1403--1410, 1994.

\bibitem[Isa93]{14}
Victor Isakov.
\newblock On uniqueness in inverse problems for semilinear parabolic equations.
\newblock {\em Archive for Rational Mechanics and Analysis}, 124(1):1--12,
  1993.

\bibitem[IY13]{13}
Oleg Imanuvilov and Masahiro Yamamoto.
\newblock Unique determination of potentials and semilinear terms of semilinear
  elliptic equations from partial {C}auchy data.
\newblock {\em J. Inverse Ill-Posed Probl.}, 21(1):85--108, 2013.

\bibitem[KKL01]{17}
Alexander Kachalov, Yaroslav Kurylev, and Matti Lassas.
\newblock {\em Inverse boundary spectral problems}.
\newblock CRC Press, 2001.

\bibitem[KLOU14]{22}
Yaroslav Kurylev, Matti Lassas, Lauri Oksanen, and Gunther Uhlmann.
\newblock Inverse problem for einstein-scalar field equations.
\newblock {\em arXiv preprint arXiv:1406.4776}, 2014.

\bibitem[KLU18]{23}
Yaroslav Kurylev, Matti Lassas, and Gunther Uhlmann.
\newblock Inverse problems for {L}orentzian manifolds and non-linear hyperbolic
  equations.
\newblock {\em Inventiones mathematicae}, 212(3):781--857, 2018.

\bibitem[KN02]{18}
Hyeonbae Kang and Gen Nakamura.
\newblock Identification of nonlinearity in a conductivity equation via the
  {D}irichlet-to-{N}eumann map.
\newblock {\em Inverse Problems}, 18(4):1079, 2002.

\bibitem[KO19]{19}
Yavar Kian and Lauri Oksanen.
\newblock Recovery of time-dependent coefficient on {R}iemannian manifold for
  hyperbolic equations.
\newblock {\em International Mathematics Research Notices},
  2019(16):5087--5126, 2019.

\bibitem[KU20a]{20}
Katya Krupchyk and Gunther Uhlmann.
\newblock Partial data inverse problems for semilinear elliptic equations with
  gradient nonlinearities.
\newblock {\em Mathematical Research Letters}, 27(6):1801--1824, 2020.

\bibitem[KU20b]{21}
Katya Krupchyk and Gunther Uhlmann.
\newblock A remark on partial data inverse problems for semilinear elliptic
  equations.
\newblock {\em Proceedings of the American Mathematical Society},
  148(2):681--685, 2020.

\bibitem[Lin22]{38}
Yi-Hsuan Lin.
\newblock Monotonicity-based inversion of fractional semilinear elliptic
  equations with power type nonlinearities.
\newblock {\em Calculus of Variations and Partial Differential Equations},
  61(5):188, 2022.

\bibitem[LL19]{24}
Ru-Yu Lai and Yi-Hsuan Lin.
\newblock Global uniqueness for the fractional semilinear schr{\"o}dinger
  equation.
\newblock {\em Proceedings of the American Mathematical Society},
  147(3):1189--1199, 2019.

\bibitem[LL22]{25}
Ru-Yu Lai and Yi-Hsuan Lin.
\newblock Inverse problems for fractional semilinear elliptic equations.
\newblock {\em Nonlinear Analysis}, 216:112699, 2022.

\bibitem[LLLS20]{27}
Matti Lassas, Tony Liimatainen, Yi-Hsuan Lin, and Mikko Salo.
\newblock Partial data inverse problems and simultaneous recovery of boundary
  and coefficients for semilinear elliptic equations.
\newblock {\em Revista Matem{\'a}tica Iberoamericana}, 37(4):1553--1580, 2020.

\bibitem[LLLS21]{26}
Matti Lassas, Tony Liimatainen, Yi-Hsuan Lin, and Mikko Salo.
\newblock Inverse problems for elliptic equations with power type
  nonlinearities.
\newblock {\em Journal de math{\'e}matiques pures et appliqu{\'e}es},
  145:44--82, 2021.

\bibitem[LLLZ22]{39}
Yi-Hsuan Lin, Hongyu Liu, Xu~Liu, and Shen Zhang.
\newblock Simultaneous recoveries for semilinear parabolic systems.
\newblock {\em Inverse Problems}, 38(11):115006, 2022.

\bibitem[LLM19]{33}
Jingzhi Li, Hongyu Liu, and Shiqi Ma.
\newblock Determining a random {S}chr\"oinger equation with unknown source and
  potential.
\newblock {\em SIAM Journal on Mathematical Analysis}, 51(4):3465--3491, 2019.

\bibitem[LLM21]{34}
Jingzhi Li, Hongyu Liu, and Shiqi Ma.
\newblock Determining a random {S}chr{\"o}dinger operator: both potential and
  source are random.
\newblock {\em Communications in Mathematical Physics}, 381(2):527--556, 2021.

\bibitem[LLPMT20]{28}
Matti Lassas, Tony Liimatainen, Leyter Potenciano-Machado, and Teemu Tyni.
\newblock Uniqueness and stability of an inverse problem for a semi-linear wave
  equation.
\newblock {\em arXiv preprint arXiv:2006.13193}, 2020.

\bibitem[LLPMT21]{29}
Matti Lassas, Tony Liimatainen, Leyter Potenciano-Machado, and Teemu Tyni.
\newblock Stability estimates for inverse problems for semi-linear wave
  equations on {L}orentzian manifolds.
\newblock {\em arXiv preprint arXiv:2106.12257}, 2021.

\bibitem[LLS20]{30}
Matti Lassas, Tony Liimatainen, and Mikko Salo.
\newblock The {P}oisson embedding approach to the {C}alder\'on problem.
\newblock {\em Mathematische Annalen}, 377(1):19--67, 2020.

\bibitem[LLST22]{37}
Tony Liimatainen, Yi-Hsuan Lin, Mikko Salo, and Teemu Tyni.
\newblock Inverse problems for elliptic equations with fractional power type
  nonlinearities.
\newblock {\em Journal of Differential Equations}, 306:189--219, 2022.

\bibitem[L{\"u}13]{41}
Qi~L{\"u}.
\newblock Observability estimate and state observation problems for stochastic
  hyperbolic equations.
\newblock {\em Inverse Problems}, 29(9):095011, 2013.

\bibitem[LU15]{40}
Hongyu Liu and Gunther Uhlmann.
\newblock Determining both sound speed and internal source in thermo-and
  photo-acoustic tomography.
\newblock {\em Inverse Problems}, 31(10):105005, 2015.

\bibitem[LUW17]{31}
Matti Lassas, Gunther Uhlmann, and Yiran Wang.
\newblock Determination of vacuum space-times from the {E}instein-{M}axwell
  equations.
\newblock {\em arXiv preprint arXiv:1703.10704}, 2017.

\bibitem[LUW18]{32}
Matti Lassas, Gunther Uhlmann, and Yiran Wang.
\newblock Inverse problems for semilinear wave equations on {L}orentzian
  manifolds.
\newblock {\em Communications in Mathematical Physics}, 360:555--609, 2018.

\bibitem[LW07]{36}
Xiaosheng Li and Jenn-Nan Wang.
\newblock Determination of viscosity in the stationary {N}avier-{S}tokes
  equations.
\newblock {\em J. Differential Equations}, 242(1):24--39, 2007.

\bibitem[LZ00]{35}
Liangyu Li and Xu~Zhang.
\newblock Exact controllability for semilinear wave equations.
\newblock {\em Journal of mathematical analysis and applications},
  250(2):589--597, 2000.

\bibitem[MU20]{42}
Claudio Munoz and Gunther Uhlmann.
\newblock The {C}alder{\'o}n problem for quasilinear elliptic equations.
\newblock {\em Annales de l'Institut Henri Poincar{\'e} C, Analyse non
  lin{\'e}aire}, 37(5):1143--1166, 2020.

\bibitem[NVW20]{43}
Gen Nakamura, Manmohan Vashisth, and Michiyuki Watanabe.
\newblock Inverse initial boundary value problem for a non-linear hyperbolic
  partial differential equation.
\newblock {\em Inverse Problems}, 37(1):015012, 2020.

\bibitem[Pos87]{44}
Jurgen Poschel.
\newblock {\em Inverse spectral theory}.
\newblock Academic Press, 1987.

\bibitem[SU97]{49}
Ziqi Sun and Gunther Uhlmann.
\newblock Inverse problems in quasilinear anisotropic media.
\newblock {\em American Journal of Mathematics}, 119(4):771--797, 1997.

\bibitem[Sun96]{46}
Ziqi Sun.
\newblock On a quasilinear inverse boundary value problem.
\newblock {\em Math. Z.}, 221(2):293--305, 1996.

\bibitem[Sun05]{47}
Ziqi Sun.
\newblock Conjectures in inverse boundary value problems for quasilinear
  elliptic equations.
\newblock {\em Cubo}, 7(3):65--73, 2005.

\bibitem[Sun10]{48}
Ziqi Sun.
\newblock An inverse boundary-value problem for semilinear elliptic equations.
\newblock {\em Electronic Journal of Differential Equations (EJDE)[electronic
  only]}, 37:1--5, 2010.

\bibitem[SZ12]{45}
Mikko Salo and Xiao Zhong.
\newblock An inverse problem for the p-{L}aplacian: boundary determination.
\newblock {\em SIAM journal on mathematical analysis}, 44(4):2474--2495, 2012.

\bibitem[Uhl09]{50}
Gunther Uhlmann.
\newblock {E}lectrical impedance tomography and {C}alder\'on's problem.
\newblock {\em Inverse Problems}, 25:123011, 2009.

\bibitem[WZ19]{51}
Yiran Wang and Ting Zhou.
\newblock Inverse problems for quadratic derivative nonlinear wave equations.
\newblock {\em Communications in Partial Differential Equations},
  44(11):1140--1158, 2019.

\end{thebibliography}

\end{document}